\documentclass[11pt]{amsart}
\usepackage{geometry}                
\geometry{letterpaper}                   
\usepackage{graphicx}
\usepackage{amsmath}
\usepackage{amsfonts}
\usepackage{amssymb}
\usepackage{epstopdf}
\usepackage{xypic}
\xyoption{all}
\DeclareGraphicsRule{.tif}{png}{.png}{`convert #1 `dirname #1`/`basename #1 .tif`.png}
\newcommand{\Hom}{\operatorname{Hom}}
\newcommand{\Mod}{\operatorname{Mod}}

\newcommand{\Out}{\operatorname{Out}}
\newcommand{\Aut}{\operatorname{Aut}}
\newcommand{\bZ}{\mathbb{Z}}

\newcommand{\bR}{\mathbb{R}}
\newcommand{\bC}{\mathbb{C}}

\newcommand{\bH}{\mathbb{H}}
\newcommand{\End}{\mathbf{End}}

\newcommand{\PGL}{\operatorname{PGL}}
\newcommand{\GL}{\operatorname{GL}}

\newcommand{\caSets}{\mathcal{SETS}}

\newcommand{\Sym}{\operatorname{Sym}}

\newcommand{\rOne}{\mathbf{1}}

\newcommand{\tr}{\operatorname{Tr}}

\newcommand{\Funct}{\mathbf{Funct}}
\newcommand{\dual}{^\vee}
\bibliographystyle{plain}
\newtheorem{thm}{Theorem}
\newtheorem{lemma}[thm]{Lemma}

\newtheorem{prop}[thm]{Proposition}

\newtheorem{quest}[thm]{Question}

\numberwithin{thm}{section}

\title[Representations of Galois Groups]{Representations of Galois Groups on the Homology of Surfaces}
\author{Thomas Koberda and Aaron Michael Silberstein}
\email[T. Koberda]{koberda@math.harvard.edu}
\email[A. Silberstein]{asilbers@math.harvard.edu}
\address{Department of Mathematics, Science Center, Harvard University, 1 Oxford St., Cambridge, MA 02138}
\subjclass{Primary 14H30; Secondary 14H37}
\keywords{Deck transformation group representations, Chevalley-Weil theory, Applications of Riemann-Roch, Representations of the mapping class group}

\begin{document}
\begin{abstract}
Let $p:\Sigma'\to\Sigma$ be a finite Galois cover, possibly branched, with Galois group $G$.  We are interested in the structure of the cohomology of $\Sigma'$ as a module over $G$.  We treat the cases of branched and unbranched covers separately.  In the case of branched covers, we give a complete classification of possible module structures of $H_1(\Sigma',\bC)$.  In the unbranched case, we algebro-geometrically realize the representation of $G$ on holomorphic $1$-forms on $\Sigma'$ as functions on unions of $G$-torsors.  We also explicitly realize these representations for certain branched covers of hyperelliptic curves.  We give some applications to the study of pseudo-Anosov homeomorphisms of surfaces and representation theory of the mapping class group.
\end{abstract}
\maketitle
\begin{center}
\today
\end{center}
\tableofcontents

\section{Introduction}
Let $\Sigma$ and $\Sigma'$ be closed, oriented $2$-manifolds, and let $p: \Sigma'\rightarrow \Sigma$ be a Galois cover branched at a finite set of points.  Then the Galois group $G = \Aut(\Sigma'/\Sigma)$ acts on $\Sigma'$ and therefore on $H_1(\Sigma', R)$ and $H^1(\Sigma', R)$ where $R$ is a constant sheaf of abelian groups.  Let $\mathcal{R}_R$ be the representation on $H^1(\Sigma', R)$, and $\eta$ its character; since the homology of $\Sigma'$ is torsion-free, \[\mathcal{R}_R = \mathcal{R}_{\bZ} \otimes_{\bZ} R\] and the character $\eta$ of $\mathcal{R}_R$ is $\bZ$-valued and independent of $R$.

The first theorem we prove in section \ref{s:geometriccw} is
\begin{thm}[Geometric Chevalley Weil]
Let $\pi: \Sigma'\rightarrow \Sigma$ be an unramified, finite Galois cover of a complete complex algebraic curve $\Sigma$ of genus $g$ (that is, a compact, oriented, topological surface equipped with a complex structure) with Galois group $G$.  Then for a generic one-dimensional subspace
\[\mathbb{C}\omega \subseteq H^{1,0}(\Sigma),
\]
there exists a canonically determined set of points $\{p_1, p_2, \dots, p_{g-1}\} \in \Sigma$ and an isomorphism
\[
H^{1,0}(\Sigma')/(\pi^*\mathbb{C}\omega)\rightarrow \Funct(\bigcup_{i=1}^{g-1}\pi^{-1}(p_i),\mathbb{C}).
\]
given by restriction, which thus commutes with the $G$-action and is an isomorphism of $G$-representations.
\end{thm}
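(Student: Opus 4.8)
The plan is to realize the claimed isomorphism as the honest restriction-of-$1$-forms map onto the fibers over a $(g-1)$-element subset of the zero divisor of $\omega$, and then to reduce the whole assertion to a single Riemann--Roch computation on $\Sigma'$. For generic $\omega$ the divisor $Z=\operatorname{div}(\omega)$ is reduced of degree $2g-2$; take $p_1,\dots,p_{g-1}$ to be $g-1$ of these zeros --- any choice for which $\pi^{-1}(p_1)\cup\cdots\cup\pi^{-1}(p_{g-1})$ is a rigid divisor on $\Sigma'$ will do, and for generic $\omega$ every choice is of this kind --- and put $D=p_1+\cdots+p_{g-1}$ on $\Sigma$. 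Then $\pi^*D=\bigsqcup_i\pi^{-1}(p_i)$ is an $\Aut(\Sigma'/\Sigma)$-invariant reduced divisor of degree $(g-1)|G|$, which by Riemann--Hurwitz equals $g'-1$ with $g'=|G|(g-1)+1=\dim H^{1,0}(\Sigma')$. Restricting a holomorphic $1$-form to the points of $\pi^*D$ gives a $G$-equivariant map $r\colon H^{1,0}(\Sigma')\to\bigoplus_{q\in\pi^*D}(K_{\Sigma'})_q$; choosing base points in each $G$-torsor $\pi^{-1}(p_i)$ and transporting cotangent lines along $G$ identifies the target with $\Funct(\bigcup_i\pi^{-1}(p_i),\mathbb{C})$ compatibly with the $G$-action (that identification is not canonical, but this is irrelevant for a statement about $G$-modules). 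Since $\omega$ vanishes at each $p_i$, $\pi^*\omega\in\ker r$, so $r$ descends to $H^{1,0}(\Sigma')/\pi^*\mathbb{C}\omega$; as source and target of the descended map both have dimension $g'-1$, it is enough to show $\ker r=\pi^*\mathbb{C}\omega$, i.e. $h^0(\Sigma',K_{\Sigma'}(-\pi^*D))=1$.

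For that reduction: since $\deg\pi^*D=g'-1$, Riemann--Roch ($\chi(\mathcal O_{\Sigma'}(\pi^*D))=0$) together with Serre duality give $h^0(K_{\Sigma'}(-\pi^*D))=h^1(\mathcal O_{\Sigma'}(\pi^*D))=h^0(\mathcal O_{\Sigma'}(\pi^*D))$, so I must show the divisor $\pi^*D$ is rigid on $\Sigma'$ --- that the only effective divisor linearly equivalent to it is itself. By the projection formula $H^0(\mathcal O_{\Sigma'}(\pi^*D))=H^0(\Sigma,\mathcal O_\Sigma(D)\otimes\pi_*\mathcal O_{\Sigma'})$, and $\pi_*\mathcal O_{\Sigma'}=\bigoplus_\rho\mathcal E_\rho\otimes V_\rho$ is the decomposition into $G$-isotypic summands, $\mathcal E_\rho$ being the multiplicity bundle of the irreducible $\rho$; the trivial summand is $\mathcal O_\Sigma$, contributing $H^0(\mathcal O_\Sigma(D))$, which is one-dimensional for general effective $D$ of degree $g-1$ (the Abel--Jacobi map $\Sym^{g-1}\Sigma\to\operatorname{Pic}^{g-1}\Sigma$ is birational onto its image). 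So rigidity of $\pi^*D$ reduces to the vanishing $H^0(\Sigma,\mathcal O_\Sigma(D)\otimes\mathcal E_\rho)=0$ for every nontrivial $\rho$.

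This vanishing is the part I expect to be hardest. Because $\pi$ is unramified Galois, $\mathcal E_\rho$ is the flat bundle associated to $\pi_1(\Sigma)\twoheadrightarrow G\xrightarrow{\rho}\GL(V_\rho)$; this representation has finite image, hence is unitary, so $\mathcal E_\rho$ is polystable of degree $0$, and (as $\rho$ is nontrivial and $\pi_1(\Sigma)\to G$ is onto) it contains no copy of $\mathcal O_\Sigma$ as a subbundle. The bundle $\mathcal O_\Sigma(D)\otimes\mathcal E_\rho$ has $\chi=0$, and a nonzero section of it produces a line subbundle $M\subseteq\mathcal E_\rho$ with $-(g-1)\le\deg M\le 0$ such that $M\otimes\mathcal O_\Sigma(D)$ is effective, i.e. $[\mathcal O_\Sigma(D)]\in W_{\deg M+g-1}-[M]$ inside $\operatorname{Pic}^{g-1}\Sigma$. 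For general $D$, $[\mathcal O_\Sigma(D)]$ is a general point of the theta divisor $W_{g-1}$: if $\deg M<0$ the translate $W_{\deg M+g-1}-[M]$ has strictly smaller dimension, a contradiction; if $\deg M=0$ it forces $[M]$ to be a translation symmetry of the principal polarization $W_{g-1}$, hence $M=\mathcal O_\Sigma$, contradicting the previous remark. (For $\dim\rho>1$ one must also bound the family of negative-degree line subbundles of the fixed polystable bundle $\mathcal E_\rho$; this Brill--Noether-type estimate is the genuinely delicate point, and it is here, rather than for line bundles, that the genericity of $D$ must be used with care.) Thus the vanishing holds for $D$ in a dense open $U\subseteq\Sym^{g-1}\Sigma$. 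To conclude, observe that $D\mapsto$ (the class of the $1$-form vanishing on $D$) defines a non-constant, hence dominant, rational map $\Sym^{g-1}\Sigma\dashrightarrow\mathbb{P}\,H^{1,0}(\Sigma)$, so a generic $\omega$ is, up to scaling, the form cutting out some $D\in U$ with $D\le\operatorname{div}(\omega)$; taking $\{p_1,\dots,p_{g-1}\}$ to be the support of such a $D$ yields the asserted isomorphism, and it is $G$-equivariant by the way $r$ was built.
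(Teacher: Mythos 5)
Your reduction is correct and matches the paper's in substance: the theorem is equivalent to the rigidity statement $h^0(\Sigma',\pi^*D)=1$ (equivalently $h^0(K_{\Sigma'}-\pi^*D)=1$) for a suitable effective $D$ of degree $g-1$ contained in $\operatorname{div}(\omega)$, and your Riemann--Roch/Serre-duality bookkeeping, the projection-formula decomposition $\pi_*\mathcal{O}_{\Sigma'}=\bigoplus_\rho\mathcal{E}_\rho\otimes V_\rho$, the trivial summand, and the case $\dim\rho=1$ (translates of the theta divisor and triviality of its translation stabilizer) are all fine. But the step you flag in parentheses is not a technical loose end --- it is the heart of the theorem whenever $G$ is nonabelian. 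For an irreducible $\rho$ with $\dim\rho\ge 2$ you need $H^0(\Sigma,\mathcal{O}(D)\otimes\mathcal{E}_\rho)=0$ for generic effective $D$ of degree $g-1$, and your proposed route requires a bound of the shape: the family of line subbundles $M\subset\mathcal{E}_\rho$ of degree $-e$ has dimension at most $e-1$, so that $\bigcup_M\bigl(W_{g-1-e}-[M]\bigr)$ cannot cover $W_{g-1}$. No such bound is proved, and it is not a formality for a \emph{fixed, special} bundle such as $\mathcal{E}_\rho$: already the extreme case $e=g-1$ is the assertion that $\mathcal{E}_\rho$ admits a theta divisor (some $N\in\operatorname{Pic}^{g-1}$ with $h^0(\mathcal{E}_\rho\otimes N)=0$), which by Raynaud's examples fails for certain semistable bundles of slope zero, so any proof must actually exploit the finite-monodromy (or effectivity) structure; your sketch does not indicate how. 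As written, the argument proves the theorem only for abelian covers.

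Two further points. First, the closing step ``non-constant, hence dominant'' for the rational map $\Sym^{g-1}\Sigma\dashrightarrow\mathbb{P}H^{1,0}(\Sigma)$, $D\mapsto[H^0(K-D)]$, is a non sequitur since the target has dimension $g-1$; dominance is true but should be argued, e.g., because the map is defined on the dense open where $h^0(D)=1$ and has finite fibers (a fiber over $[\omega]$ consists of $(g-1)$-element subsets of the $2g-2$ zeros of $\omega$), or as in the paper's first lemma by choosing $g-1$ zeros of $\omega$ that projectively span the hyperplane $H_\omega$. Second, note that the paper avoids your hard vanishing entirely: after the same skyscraper-sheaf reduction on $\Sigma$, it proves $h^0(\Sigma',p^*K_1)=1$ by geometric Riemann--Roch on the canonical curve of $\Sigma'$, via the commutative diagram relating $\Sym^{g-1}\Sigma\to\Sym^{(g-1)|G|}\Sigma'$ and the span maps $\eta_\Sigma$, $\eta_{\Sigma'}$, i.e., by showing the pulled-back points remain in general position in $\mathbb{P}H^0(K_{\Sigma'})\dual$. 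If you want to complete your bundle-theoretic route, that is where a genuinely new input (a Brill--Noether estimate for the flat bundles $\mathcal{E}_\rho$, or a direct general-position argument like the paper's) must be supplied.
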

This implies the classical theorem of Chevalley and Weil \cite{CW} in the unramified case (and its natural refinement to take into account the Hodge decomposition using the Atiyah-Bott fixed point formula), but provides extra information about the representation which comes from the algebraic geometry of the situation.  It also has the advantage of working in characteristic $p$.


In section \ref{s:ramcw}, we consider general branched covers of $\Sigma$, and we classify so called topological representations of $G$, i.e. ones that occur as permutation representations on the ramification divisor.  In the case where $\Sigma$ is a disk, we will show:

\begin{thm}
Let $G$ be a finite group and $\rho$ a permutation representation.  Then $\rho$ is topological if and only if there is a surjective map $F\to G$ from a free group $F$ of finite rank and a cyclic subgroup $H$ that is the image of a generator of $F$ such that $\rho$ is the permutation representation associated to the $G$-set $G/H$.
\end{thm}

This result will allow us to prove, when $\Sigma$ is closed:
\begin{thm}
Let $\Sigma'\to\Sigma$ be a branched cover with branch points $\{q_i\}$, let $*_i\in p^{-1}(q_i)$ be a choice of points in the fiber of each branch point, and let \[V=\bigoplus_{i=1}^m V_i\] be the direct sum of the permutation representations of $G$ on $G/Stab(*_i)$.  Then, the $G$-module $H_1(\Sigma',\bC)$ is isomorphic to the $G$-module \[H_1(\Sigma'\setminus p^{-1}(B),\bC)/(V/\mathbf{1}),\] where $\mathbf{1}$ is the trivial representation of $G$.
\end{thm}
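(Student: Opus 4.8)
The plan is to compare the long exact sequence of the pair $(\Sigma', p^{-1}(B))$ with an analysis of how the $G$-action permutes the preimages of the branch points. Write $B = \{q_1,\dots,q_m\}$ for the branch locus in $\Sigma$, and let $\Sigma'' = \Sigma' \setminus p^{-1}(B)$, so that $\Sigma'' \to \Sigma \setminus B$ is an honest (unramified) covering space with deck group $G$. The basic topological input is the long exact sequence of the pair, or equivalently the Mayer–Vietoris / excision comparison relating $H_*(\Sigma'')$ and $H_*(\Sigma')$: removing the finite set $p^{-1}(B)$ from the closed surface $\Sigma'$ changes $H_1$ in a controlled way, since a punctured surface deformation retracts onto a wedge of circles and $H_2$ dies. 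Concretely one gets an exact sequence of $G$-modules
\[
0 \to H_2(\Sigma',\bC) \to H_0(p^{-1}(B),\bC)/\text{(something)} \to H_1(\Sigma'',\bC) \to H_1(\Sigma',\bC) \to 0,
\]
and the first task is to identify the relevant terms $G$-equivariantly.

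**Identifying the kernel $G$-equivariantly.**

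The key point is that $H_0(p^{-1}(B),\bC) \cong \bigoplus_i H_0(p^{-1}(q_i),\bC) \cong \bigoplus_i \bC[G/\mathrm{Stab}(*_i)]$ as $G$-modules, because $G$ acts on the fiber $p^{-1}(q_i)$ transitively (the cover is Galois) with point-stabilizer $\mathrm{Stab}(*_i)$; hence $H_0(p^{-1}(q_i),\bC)$ is exactly the permutation module $V_i$. So $H_0(p^{-1}(B),\bC) \cong V$. Now I would feed in the relative sequence: $H_2(\Sigma',\bC) \to H_2(\Sigma', p^{-1}(B);\bC)$ is an isomorphism onto, and $H_2(\Sigma',\bC) \cong \bC$ is the \emph{trivial} $G$-module (orientation-preserving action). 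Then the connecting map $H_2(\Sigma',p^{-1}(B);\bC) \to H_1(p^{-1}(B),\bC) = 0$ is zero, while $H_1(\Sigma',p^{-1}(B);\bC) \cong H_1(\Sigma'',\bC)$ by excision (the pair $(\Sigma', p^{-1}(B))$ excises to a disjoint union of punctured disks glued to $\Sigma''$). The upshot is the four-term exact sequence
\[
0 \to \bC \xrightarrow{\ \partial\ } V \to H_1(\Sigma'',\bC) \to H_1(\Sigma',\bC) \to 0,
\]
where the map $\bC \to V$ sends the fundamental class to the sum of all the fiber points — i.e., its image is the diagonal copy of the trivial representation $\mathbf{1}$ inside $\bigoplus_i \bC[G/\mathrm{Stab}(*_i)]$ (sum of all coordinates in each block). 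Therefore the image of $V$ in $H_1(\Sigma'',\bC)$, which equals the kernel of $H_1(\Sigma'') \to H_1(\Sigma')$, is $G$-equivariantly isomorphic to $V/\mathbf{1}$, and so
\[
H_1(\Sigma',\bC) \cong H_1(\Sigma'',\bC)/(V/\mathbf{1})
\]
as $G$-modules, which is the claim.

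**Where the work lies.**

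The routine parts are the two identifications via excision and the computation $H_0(p^{-1}(q_i)) = V_i$; I will state these with brief justification. The step requiring genuine care is verifying that the composite $\bC = H_2(\Sigma') \to V \to H_1(\Sigma'')$ really has image the \emph{diagonal} trivial subrepresentation and not some other copy of $\mathbf 1$ — equivalently, that the connecting homomorphism in the pair sequence, followed by excision, sends the orientation class to the cycle $\sum_{i}\sum_{x \in p^{-1}(q_i)} (\text{small loop around } x)$, with all coefficients equal. This is a local computation around each branch point: near a point of $p^{-1}(q_i)$ the cover looks like $z \mapsto z^{e_i}$ on a disk, and tracking the boundary map shows each puncture in a fixed fiber contributes with the same coefficient because $G$ acts transitively on that fiber and the connecting map is $G$-equivariant. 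Once that coefficient-matching is pinned down, exactness of the four-term sequence and the identification $\ker(V \to H_1(\Sigma'')) = \mathbf 1$ (diagonal) finish the argument. A secondary subtlety is bookkeeping the case where some $q_i$ is in fact unramified or where $\mathrm{Stab}(*_i)$ is trivial; but the permutation-module description $V_i = \bC[G/\mathrm{Stab}(*_i)]$ handles these uniformly, so no separate treatment is needed.
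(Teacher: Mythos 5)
Your argument is correct, but it is packaged differently from the paper's. The paper works with explicit generators: it decomposes $\Sigma\setminus B$ using a disk containing the branch locus, identifies the classes of small loops over each fiber $p^{-1}(q_i)$ inside $H_1(\Sigma'\setminus p^{-1}(B),\bC)$, computes the permutation character of $G$ on $\bC[G/Stab(*_i)]$, and treats the cases $|B|\geq 2$ and $|B|=1$ separately, with the single copy of $\mathbf{1}$ appearing through the one relation among the boundary loops (and an explicit discussion of the resulting ambiguity in which trivial summand is factored out). You instead run the long exact sequence of a pair together with excision/local homology, obtaining the four-term sequence $0\to\bC\to V\to H_1(\Sigma'\setminus p^{-1}(B),\bC)\to H_1(\Sigma',\bC)\to 0$, identify $V$ as a $G$-module using transitivity of $G$ on fibers and orientation-preservation of deck transformations, and pin down the killed trivial subrepresentation canonically as the image of the fundamental class (the diagonal vector), after which semisimplicity over $\bC$ gives the claimed isomorphism. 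Your route is case-free, makes $G$-equivariance automatic by naturality, and identifies exactly which copy of $\mathbf{1}$ dies; the paper's route is more elementary and produces by-products used later in the section (which stabilizers and permutation characters can occur, the disk decomposition lemma). One slip to repair in your write-up: the relative group you need is that of the pair $(\Sigma',\Sigma'\setminus p^{-1}(B))$, for which excision gives $H_2(\Sigma',\Sigma'\setminus p^{-1}(B);\bC)\cong\bigoplus_{x\in p^{-1}(B)}H_2(D_x,D_x\setminus\{x\};\bC)\cong V$ and $H_1(\Sigma',\Sigma'\setminus p^{-1}(B);\bC)=0$; your stated identification $H_1(\Sigma',p^{-1}(B);\bC)\cong H_1(\Sigma'\setminus p^{-1}(B),\bC)$ is not an excision statement (it holds over $\bC$ only via duality), and the $V$ in your displayed sequence is this relative $H_2$, not $H_0(p^{-1}(B),\bC)$ — though both are isomorphic to $V$, so the conclusion is unaffected once the correct pair is named.
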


The subrepresentation $V$ above is a direct sum of topological representations.

We will discuss the issue of when the representation of $G$ on $H^{1,0}(\Sigma')$ is isomorphic to that on $H^{0,1}(\Sigma')$ in the case of branched covers.

We shall also discuss some applications of this theory to the study of the mapping class group in section \ref{s:MCG}.  Namely, we shall produce a class of representations of finite index subgroups of the mapping class group that arise from actions of the mapping class group on the homology of a characteristic cover.  Furthermore, we shall discuss the problem of finding the dilatation of a pseudo-Anosov mapping class, and prove:
\begin{thm}
Let $F^{u,s}$ be the unstable and stable foliations associated to a pseudo-Anosov mapping class $\psi$ with dilatation $K$.  $F^{u,s}$ remain nonorientable on every finite cover of $\Sigma$ if and only if the spectral radius of $\psi$'s action on the real homology on every finite cover is strictly less than $K$.
\end{thm}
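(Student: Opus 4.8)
The argument reduces, after routine bookkeeping with covers, to the following lemma: \emph{if $\phi$ is a pseudo-Anosov homeomorphism of a closed oriented surface $S$ with dilatation $\Lambda$, then the spectral radius of $\phi_*$ on $H_1(S,\bR)$ equals $\Lambda$ if the invariant foliations of $\phi$ are orientable, and is strictly less than $\Lambda$ otherwise.} To deduce the theorem from the lemma, observe first that every finite cover of $\Sigma$ is dominated by one whose fundamental group is a $\psi_*$-invariant finite-index subgroup of $\pi_1(\Sigma)$ (intersect the finitely many $\psi_*^{k}$-translates of a given finite-index subgroup), that pulling a foliation back along a further cover preserves non-orientability, and that passing from $\psi$ to a power simultaneously raises the dilatation and the homological spectral radius to the same power; hence neither side of the asserted equivalence changes if we restrict attention to finite covers $q\colon\Sigma'\to\Sigma$ to which $\psi$ itself lifts. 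On such a cover every lift $\widetilde\psi$ is pseudo-Anosov with dilatation $K$ and invariant foliations $q^*F^{u,s}$, and any two lifts differ by a deck transformation and so share these foliations; by the lemma the number $\rho\bigl(\widetilde\psi_*\mid H_1(\Sigma',\bR)\bigr)$ therefore equals $K$, or is strictly smaller, according as $q^*F^{u,s}$ is orientable or not, independently of all choices. This is precisely the claimed biconditional.

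It remains to sketch the lemma. The bound $\rho\bigl(\phi_*\mid H_1(S,\bR)\bigr)\le\Lambda$ is standard: a homology class is represented by a multicurve $\gamma$, the length of $\phi^n(\gamma)$ in the singular flat metric adapted to $\phi$ grows like $\Lambda^n$, and the stable norm of a class is bounded above by the length of any representing curve, so the exponential growth rate of $\phi^n_*$ is at most $\Lambda$. If $F^u$ (equivalently $F^s$) is orientable, fix leafwise orientations; the transverse measure of $F^u$ then integrates over $1$-cycles to a class $[\alpha^u]\in H^1(S,\bR)$ with $\phi^*[\alpha^u]=\Lambda^{\pm 1}[\alpha^u]$, and similarly $[\alpha^s]$ with the reciprocal eigenvalue, both nonzero since $\langle[\alpha^u],[\alpha^s]\rangle=\pm\operatorname{Area}(S)\ne 0$. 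So $\Lambda$ is an eigenvalue, and with the bound the spectral radius equals $\Lambda$.

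The substantive assertion --- that non-orientability forces the \emph{strict} inequality --- is the crux and the main obstacle. The plan is to pass to the orientation double cover $\pi\colon\widehat S\to S$ of $F^u$, a connected double cover branched exactly over the odd-pronged singularities, on which $\pi^*F^u$ becomes orientable. Replacing $\phi$ by $\phi^2$ if necessary (which squares both sides of the inequality to be proved, hence is harmless), $\phi$ lifts to a pseudo-Anosov $\widehat\phi$ on $\widehat S$ with dilatation $\Lambda$ and orientable invariant foliations $\pi^*F^{u,s}$, so by the previous paragraph $\rho\bigl(\widehat\phi_*\mid H_1(\widehat S,\bR)\bigr)=\Lambda$, attained by the transverse-measure class $[\widehat\alpha^u]$. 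The deck involution $\iota$ reverses the leafwise orientation of $\pi^*F^u$, so $[\widehat\alpha^u]$ lies in the $(-1)$-eigenspace $H_1(\widehat S,\bR)^-$ of $\iota_*$; on the other hand, since $2$ is invertible, the transfer shows that $\pi_*$ restricts to an isomorphism $H_1(\widehat S,\bR)^+\xrightarrow{\sim}H_1(S,\bR)$ intertwining $\widehat\phi_*$ and $\phi_*$. It therefore suffices to show that the leading eigenvalue of $\widehat\phi_*$ is simple: its generalized eigenspace is the line $\bR[\widehat\alpha^u]\subseteq H_1(\widehat S,\bR)^-$, and no other eigenvalue of $\widehat\phi_*$ on $H_1(\widehat S,\bC)$ has modulus $\Lambda$; granting this, $\widehat\phi_*$ has spectral radius strictly below $\Lambda$ on $H_1(\widehat S,\bR)^+\cong H_1(S,\bR)$, which is the lemma. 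This simplicity statement is where the genuine work lies; one can extract it from Thurston's convergence $\Lambda^{-n}\widehat\phi^n(\gamma)\to i(\gamma,\pi^*F^s)\,\pi^*F^u$ in the space of measured foliations --- orientability of $\pi^*F^u$ makes the passage from a nearby measured foliation to its homology class the restriction of a linear map, so $\Lambda^{-n}\widehat\phi^n_*$ converges, as an operator on $H_1(\widehat S,\bR)$, to a rank-one operator with image $\bR[\widehat\alpha^u]$; a complex eigenvalue of modulus $\Lambda$, or a nontrivial Jordan block at $\pm\Lambda$, would obstruct this convergence, while a real eigenvector with eigenvalue $\pm\Lambda$ must lie in the image. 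The main obstacle is thus exactly this rank-one/simplicity property of the leading eigenvalue of a pseudo-Anosov action on homology (for which one may alternatively appeal to Rykken's study of expanding factors of pseudo-Anosov maps), together with the verification that the orientation double cover and the transfer behave as claimed when odd-pronged singularities are present.
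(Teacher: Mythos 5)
Your proposal is correct and follows essentially the same route as the paper: pass to the double cover branched over the (odd-pronged) singularities where the foliation orients, use the transverse-measure/Ruelle--Sullivan class as a $K$-eigenvector there, invoke the known simplicity/dominance of the top eigenvalue for orientable pseudo-Anosovs (the paper cites Thurston--McMullen where you use Thurston's convergence in $\mathcal{MF}$ or Rykken), and observe via the transfer splitting that this eigenvector lies in the anti-invariant part of the deck involution, so the invariant part $\cong H_1(\Sigma,\bR)$ has spectral radius strictly below $K$. One small slip in your cover bookkeeping: the fact you need (and which is true) is that pullback preserves \emph{orientability}, so orientability on some finite cover propagates to a dominating $\psi$-invariant cover --- not that pullback preserves non-orientability, which is false in general and is indeed the whole point of the construction.
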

In section \ref{s:cyclic} we give an analysis along the lines of \cite{Mc} of how the algebro-geometric description of the regular representations in section \ref{s:geometriccw} can be constructed in a situation with relatively mild branching, and propose some questions related to the geometric picture associated to a ramified cover for further research.
\section{Unramified Chevalley-Weil Theory}\label{s:geometriccw}
\subsection{The Geometric Unramified Chevalley-Weil Theory}
In this section, we choose a complex structure on our surface and give a proof of the unramified holomorphic Chevalley-Weil theorem which bypasses the use of the trace formula.  We will use throughout this section the tools in \cite{ACGH}.

Let $C$ be a smooth, algebraic curve of genus $g$ over $\bC$.  We begin with the following lemma:

\begin{lemma} Let $K$ be a generic canonical divisor on $C$.  Then we may write \[K = K_1 + K_2\] where \[K_1, K_2 > 0, \deg K_1 = \deg K_2 = g-1,\text{ and }h^0(K_1) = h^0(K_2) = 1.\]\end{lemma}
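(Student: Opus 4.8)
The plan is to exhibit a generic canonical divisor $K$ as a sum $D+D'$, where $D$ runs over effective divisors of degree $g-1$ with $h^0(D)=1$ and $D'$ is the unique effective divisor in $|K-D|$. Throughout I assume $g\ge 2$, the case $g\le 1$ being trivial (there is then no effective canonical divisor of positive degree).

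First I would record the Riemann--Roch bookkeeping that links the conditions on the two halves. For any effective divisor $D$ of degree $g-1$ we have, by Riemann--Roch, $h^0(D)-h^1(D)=\deg D+1-g=0$, so $h^0(D)=h^1(D)$; and by Serre duality $h^1(D)=h^0(K-D)$. Hence $h^0(K-D)=h^0(D)$. In particular, if $h^0(D)=1$ then $|K-D|$ consists of a single effective divisor $D'$, of degree $(2g-2)-(g-1)=g-1$, with $D+D'\in|K|$; and applying the same identity to $D'$ (note $K-D'\sim D$) gives $h^0(D')=h^0(K-D')=h^0(D)=1$. So once $h^0(D)=1$, both $D$ and $D'$ satisfy all the stated conclusions, and it remains only to see that a \emph{generic} member of $|K|$ is of this form.

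For that I would use a dimension count on $\Sym^{g-1}C$. Let $U\subseteq\Sym^{g-1}C$ be the locus of effective divisors $D$ of degree $g-1$ with $h^0(D)=1$. This locus is open, by upper semicontinuity of $h^0$, and nonempty --- a general effective divisor of degree $g-1$ has $h^0=1$ (see \cite{ACGH}; concretely, one adjoins to the zero divisor $g-1$ points chosen successively outside the base locus of the residual canonical system, the first step using that $|K|$ is base-point free) --- hence dense in the irreducible variety $\Sym^{g-1}C$, so $\dim U=g-1=\dim|K|$. The previous paragraph defines a morphism
\[
\Psi\colon U\longrightarrow |K|\cong\bP^{g-1},\qquad \Psi(D)=D+D'.
\]
For $D_0\in U$, the equality $\Psi(D)=\Psi(D_0)$ means $D+D'=D_0+D_0'$ as divisors, so $D\le D_0+D_0'$; hence $\Psi^{-1}(\Psi(D_0))$ is contained in the finite set of degree-$(g-1)$ sub-divisors of $D_0+D_0'$, and is in particular finite. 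If $\Psi$ were not dominant, its image would have dimension $\le g-2$ and every nonempty fiber would have dimension $\ge\dim U-(g-2)\ge 1$, contradicting this finiteness. Therefore $\Psi$ is dominant, so its image contains a dense open subset of $|K|$. For a generic canonical divisor $K$ in that open set, write $K=K_1+K_2$ with $K_1=D\in U$ and $K_2=D'$; then $K_1,K_2>0$, $\deg K_1=\deg K_2=g-1$, and $h^0(K_1)=h^0(K_2)=1$, as required.

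The only delicate step is this last one --- ensuring that it is a generic, rather than merely some, canonical divisor that admits the splitting. What makes the dimension count close is the elementary observation that a decomposition $K=D+D'$ is in particular a choice of a sub-divisor $D\le K$, which pins the fibers of $\Psi$ down to finite sets; everything else reduces to Riemann--Roch, Serre duality, and base-point-freeness of the canonical system on $C$.
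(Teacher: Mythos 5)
Your proof is correct, but it follows a genuinely different route from the paper's. The paper argues through the canonical map $\kappa: C\rightarrow \bP H^0(K)\dual$ and (implicitly) geometric Riemann--Roch: the $2g-2$ points of a generic canonical divisor span the hyperplane $H_\omega\cong\bP^{g-2}$, one picks $g-1$ of them that still projectively generate $H_\omega$, and general position forces $h^0(K_1)=1$, with $h^0(K_2)=1$ then following from Riemann--Roch; this is deliberately constructive, since the same canonical-image picture (divisors of degree $g-1$ spanning $(g-2)$-planes, the maps $\eta_C$, $\eta_{C'}$ on $\Sym^{g-1}C$) is reused immediately afterwards to show the decomposition pulls back well under the cover. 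You instead prove existence by a parameter count: the locus $U\subseteq\Sym^{g-1}C$ of divisors with $h^0=1$ is open, dense, of dimension $g-1$, the residuation map $D\mapsto D+D'$ into $|K|$ has finite fibers because $D$ is a subdivisor of its image, so it is dominant, and the Riemann--Roch/Serre duality bookkeeping transfers $h^0=1$ from $D$ to $D'$. This is softer and arguably more robust (it makes no appeal to general-position properties of the canonical image, so the hyperelliptic case needs no separate comment, and it works in any characteristic), at the cost of being less explicit about which $K_1$ one takes and of not setting up the geometry the paper needs later. One small point worth a sentence in your write-up: that $D\mapsto D'$ is actually a morphism on $U$ deserves justification (it is the standard residuation map, regular where $h^0(K-D)=1$); alternatively you can sidestep it entirely by running the same dimension count on the incidence variety $\{(D,E)\in\Sym^{g-1}C\times\Sym^{g-1}C : D+E\sim K\}$, whose projections make all the finiteness and constructibility claims immediate.
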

\begin{proof} By the definition of genus, $h^0(K) = g$ and by the theorem of Riemann-Roch, $\deg K = 2g - 2$.  Let $H_{\omega}$ be the hyperplane of $\mathbb{P}H^0(K)\dual$ given by $\omega$.  We have the canonical map \[
\kappa: C\rightarrow \mathbb{P}H^0(K)\dual.
\]
$H_\omega$ is a $g-2$-dimensional projective subspace of the target of $\kappa$, and contains $2g - 2$ points of $C$, which projectively generate $H_\omega$.  Consider a subset of $g-1$ points which projectively generates $H_\omega$ (that is, this subset is not contained in any proper linear subspace of $H_\omega$), and let $K_1$ be their sum.  Then $K_1$ and, by Riemann-Roch, $K - K_1 = K_2$ satisfy the hypotheses of the theorem.
\end{proof}
Let $i: K_2\rightarrow C$ be the inclusion map of the divisor $K_2$, considered as a closed subspace.  Then we have a short exact sequence of sheaves (where now we consider the divisors as a line bundle)
\begin{center}
\mbox{\xymatrix{
0 \ar[r] & K_1 \ar[r] & K \ar[r] & i_* i^* K_2 \ar[r] & 0.
}}
\end{center}
The third term is a skyscraper sheaf on the support of $K_2$ so by the long-exact cohomology sequence and Serre duality, we have an exact sequence
\begin{center}
\mbox{\xymatrix{
0 \ar[r] & H^0(K_1) \ar[r] & H^0(K) \ar[r]^\iota & H^0(i_*i^*K_2) \ar[r] & H^1(K_1) \ar[r]^\lambda & H^1(K).
}}
\end{center}
By a combination of the assumptions and Serre duality, $\lambda$ is the isomorphism
\[
\lambda: H^0(K_2)\dual\rightarrow H^0(\mathcal{O}_C)\dual.
\]
This gives that
\begin{center}
\mbox{\xymatrix{
0 \ar[r] & H^0(K_1) \ar[r] & H^0(K) \ar[r]^\iota & H^0(i_*i^*K_2) \ar[r] & 0
}}
\end{center}
is short exact.  $H^0(K_1)$ is generated by a single $\omega$, and thus
\[
H^0(i_*i^* K_2)\simeq H^0(K)/\bC\omega.
\]
But $H^0(i_*i^* K_2)$ can be (up to multiplication by $\bC^{*(g - 1)}$) given canonically the structure of $\Hom_{\caSets}(K_2, \mathbb{C})$, the complex-valued functions on the support of $K_2$.

Now, let $C'$ be an unramified, finite cover of $C$, and let $K = K_1 + K_2$ be a generic decomposition of the canonical divisor of $C$ as in the lemma; then
\[
K_{C'} = p^*(K) = p^*(K_1) + p^*(K_2)
\]
must be a decomposition of the canonical divisor of $C'$ as in the lemma.  To see this, consider the commutative diagram
\begin{center}
\mbox{\xymatrix{
& \Sym^{n(g-1)}C' \ar[dr]^{\eta_{C'}} & \\
\Sym^{g-1}C \ar[r]^{\eta_C} \ar[ur]^{\sum_{\gamma\in G} \gamma x} & \mathbb{P}H^0(C, K)\ar[r]^{\mathbb{P}p^*} & \mathbb{P}H^0(C', K)
}}
\end{center}
$\eta_C$ (and $\eta_{C'}$) defines a rational, nonzero map by taking $g - 1$ points of $C$ (and $n(g-1)$ points of $C'$, as the genus of $C'$ is $n(g-1)+1$) to the unique $g-2$-plane they span, when this makes sense; the nontrivial geometry in this argument is a standard theorem on the canonical map which says this map is regular and nonzero.  The map $\sum_{\gamma\in G} \gamma x$ takes an unordered $g-1$-tuple to an unordered $n(g-1)$-tuple by summing over all orbits.  The commutativity of this diagram shows that if $\eta_X$ is defined on $K_1$ then $\eta_{C'}$ is defined on $p^*(K_1)$ so $p^*(K_1)$ spans a plane; equivalently, $\eta_C$ and $\eta_{C'}$ are defined on a $g-1$- or $n(g-1)$-tuple considered as a divisor $D$ precisely when the divisors have $h^0(D) = 1$.  So now, we have, noncanonically,
\[
H^0(p^*(K_1))/\bC p^*\omega = \Hom_{\caSets}(K_2, \bC)
\]
which gives us, by considering the functions which are supported on a single fiber (modulo $\bC p^*\omega$, of course), a \textbf{canonical} decomposition into $(g-1)$ times the regular representation, given a trivial representation:
\begin{thm}
Let $\langle\cdot,\cdot\rangle$ be the inner product on $H^0(K_{\Sigma'})$ for a K\"ahler metric pulled back from one associated to the orientation class of $\Sigma$. For every $P \in p^*(K_2)$ there exists a form $\delta_P \in H^0(K_{\Sigma'})$ such that $\delta_P(P)\neq 0$ and $\delta_P(P') = 0$ for every $P' \in p^*(K_2), P' \notin P$ , such that $\langle P, P\rangle = 1$. This then gives an isomorphism compatible with $G$
\[
\mathbf{CW}: H^0(K_{\Sigma'})/\bC p^*\omega\rightarrow \Funct(p^*(K_2), \bC)
\]
which explicitly realizes the decomposition
\[
H^0(K_{\Sigma'}) = \rho_R^{g-1} \oplus \rOne.
\]
\end{thm}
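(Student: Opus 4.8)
The plan is to upgrade the noncanonical identification $H^0(p^*K_1)/\bC p^*\omega \cong \Hom_{\caSets}(K_2,\bC)$ obtained above to a genuinely $G$-equivariant isomorphism onto $\Funct(p^*(K_2),\bC)$, and to pin down the scaling ambiguity using the Kähler inner product. First I would observe that the source $H^0(K_{\Sigma'})/\bC p^*\omega$ already carries a $G$-action (since $p^*K_1 = K_{\Sigma'}$ and $p^*\omega$ is $G$-fixed), and the target $\Funct(p^*(K_2),\bC)$ carries the permutation $G$-action coming from $G$ permuting the fibers $\pi^{-1}(p_i)$. The restriction map $H^0(K_{\Sigma'}) \to \Funct(p^*(K_2),\bC)$, $s \mapsto (P \mapsto s(P))$, is manifestly $G$-equivariant once we fix trivializations of the line bundle $K_{\Sigma'}$ at the points of $p^*(K_2)$ in a $G$-equivariant way — and here the key point is that, because $p^*(K_2) = \sum_{i=1}^{g-1} \pi^{-1}(p_i)$ is a $G$-invariant reduced divisor and $G$ acts freely on it (the cover is unramified), we may choose a trivialization at one point in each fiber and transport it by $G$; this makes the evaluation map well-defined and $G$-equivariant up to the overall $\bC^{*(g-1)}$ acting fiber-by-fiber. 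The exact sequence from the lemma, pulled back, shows this restriction map has kernel exactly $H^0(p^*K_1 - p^*K_2) = H^0(\mathcal{O}_{\Sigma'}\otimes\mathcal{O}(p^*\omega)) = \bC p^*\omega$ and is surjective, so it descends to the claimed isomorphism $\mathbf{CW}$.

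Next I would construct the distinguished forms $\delta_P$. For each $P \in p^*(K_2)$, surjectivity of $\mathbf{CW}$ gives some form vanishing at all other points of $p^*(K_2)$ but not at $P$; this form is unique modulo $\bC p^*\omega$, and since $p^*\omega$ itself does not vanish on all of $p^*(K_2)$ (as $K_1$ and $K_2$ share no support, $\omega$ being a generator of $H^0(K_1)$ with zero divisor $K_1$ disjoint from $K_2$), we can normalize within the coset so that $\delta_P$ is determined up to a scalar, and then fix that scalar by demanding $\langle \delta_P,\delta_P\rangle = 1$ (this uses that the Kähler inner product is positive-definite on $H^0(K_{\Sigma'})$). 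The forms $\{\delta_P\}_{P\in p^*(K_2)}$ then map under $\mathbf{CW}$ to (nonzero scalar multiples of) the indicator functions $\mathbf{1}_P$, so they descend to a basis of $H^0(K_{\Sigma'})/\bC p^*\omega$ permuted by $G$ up to scalars — which is precisely the statement that this quotient is $(g-1)$ copies of the regular representation once we check the $G$-orbits on $p^*(K_2)$ are exactly the $g-1$ fibers, each a free $G$-torsor. Splitting off the trivial subrepresentation $\bC p^*\omega$ then yields $H^0(K_{\Sigma'}) = \rho_R^{g-1}\oplus\rOne$ as claimed.

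The main obstacle I anticipate is making the phrase ``compatible with $G$'' genuinely canonical rather than merely true up to the torus $\bC^{*(g-1)}$: the evaluation map depends on a choice of local trivialization of $K_{\Sigma'}$ at each point of $p^*(K_2)$, and different choices rescale the $\delta_P$'s. The resolution is that $G$ acts transitively on each fiber $\pi^{-1}(p_i)$, so a single choice of trivialization at one point per fiber propagates $G$-equivariantly, and the normalization $\langle\delta_P,\delta_P\rangle=1$ — together with the fact that the pulled-back Kähler metric is $G$-invariant, hence $\langle g\cdot s, g\cdot s\rangle = \langle s,s\rangle$ — forces $g\cdot\delta_P$ and $\delta_{gP}$ to agree up to a unit scalar of absolute value $1$; absorbing a compatible choice of phases (or simply working projectively, which suffices for the representation-theoretic conclusion) removes the remaining ambiguity. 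A secondary technical point to verify carefully is that genericity of $\omega$ guarantees $K_1$ and $K_2$ are reduced and disjoint and that $p^*K_1, p^*K_2$ still satisfy $h^0 = 1$, which is exactly what the commutative diagram with the canonical maps $\eta_C,\eta_{C'}$ in the preceding discussion establishes.
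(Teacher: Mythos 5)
Your overall route is the paper's own construction: pull back the generic decomposition $K=K_1+K_2$, use the restriction sequence to identify $H^0(K_{\Sigma'})/\bC p^*\omega$ with functions on the support of $p^*(K_2)$, note that this support is a disjoint union of $g-1$ free $G$-orbits (so the quotient is $\rho_R^{g-1}$, with $\bC p^*\omega$ supplying the trivial summand), and get the $\delta_P$ from surjectivity. But there is a concrete error in your $\delta_P$ step. Since $\operatorname{div}(\omega)=K=K_1+K_2$, the form $p^*\omega$ vanishes at \emph{every} point of $p^*(K_2)$; this is exactly why evaluation descends to the quotient by $\bC p^*\omega$, and it is forced by your own (correct) statement that $\bC p^*\omega$ is the kernel of the restriction map. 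Your parenthetical claim that the zero divisor of $\omega$ is $K_1$, disjoint from $K_2$, so that ``$p^*\omega$ does not vanish on all of $p^*(K_2)$,'' is false and contradicts the rest of your argument: adding a multiple of $p^*\omega$ changes no value on $p^*(K_2)$, so there is no way to ``normalize within the coset,'' and $\delta_P$ is not determined up to a scalar by its vanishing pattern. The theorem only asserts existence, which survives (take any preimage of the indicator function and rescale to unit Hodge norm; if a preferred representative is wanted, take the one orthogonal to $p^*\omega$), but the uniqueness mechanism you describe does not work.

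Two smaller slips of the same bookkeeping kind: $K_{\Sigma'}=p^*K$, not $p^*K_1$ (for $G$-equivariance of the source you only need that $K_{\Sigma'}$ is pulled back, and the trivializations at $p^*(K_2)$ can simply be pulled back from trivializations of $K_\Sigma$ at the $g-1$ points of $K_2$, which is automatically $G$-invariant); and the kernel of restriction is $H^0(K_{\Sigma'}-p^*K_2)$, i.e.\ the image of $H^0(p^*K_1)$ under multiplication by the canonical section of $\mathcal{O}(p^*K_2)$ --- not $H^0(p^*K_1-p^*K_2)$ --- and it equals $\bC p^*\omega$ precisely because $h^0(p^*K_1)=1$, the point you correctly defer to the diagram with $\eta_C$ and $\eta_{C'}$; surjectivity then follows either from $h^0(p^*K_2)=1$ via Serre duality or from the dimension count $h^0(K_{\Sigma'})=n(g-1)+1$ against the $n(g-1)$ points of the (reduced, unramified) divisor $p^*(K_2)$. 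With these corrections your argument agrees with the paper's.
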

\subsection{Explicitly Realizing Regular Representations}
Let $G$ be a finite group, and let $X$ be a $G$-torsor (that is, a set on which $G$ acts simply transitively; picking a basepoint identifies $X$ with $G$, but this basepoint is not of course canonical).  Then the regular representation of $G$ can be written as
\[
\rho_R = \mathbf{Ind}_e^G \mathbf{1} = \Funct(X, \mathbb{C}).
\]

There is a canonical, $G$-invariant Hermitian, positive-definite inner product on this representation:
\[
\langle \phi, \eta \rangle = \sum_{g\in G} \phi(gx)\overline{\eta(gx)}
\]
which is of course independent of the $x \in X$ chosen.  Let $x \in X$ and consider the function $\delta_x$ supported at $x$ and taking only the value $1$.  Then $\delta_x$ has the property that for all $g, g' \in G$ we have
\[
\langle g\delta_x, g'\delta_x\rangle = \left\{\begin{array}{ll} 1 & \text{ if } g = g' \\ 0 & \text{ if }g \neq g'\end{array}\right.\] 
\begin{prop}
Let $V, \langle\cdot, \cdot\rangle$ be a representation of $G$ in which we have chosen a $\delta$ with the property above.  Then this induces an injection
\[
\iota_x: \rho_R\rightarrow V
\]
which preserves the inner product, by
\[
\iota_x(\delta_x) = \delta.
\]
\end{prop}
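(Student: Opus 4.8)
The plan is to construct $\iota_x$ from the universal property of the induced representation and then deduce the isometry (and hence injectivity) statement from a one-line orthonormality computation. Since $\rho_R = \mathbf{Ind}_e^G\mathbf{1}$, Frobenius reciprocity supplies a natural isomorphism $\Hom_G(\rho_R, V)\cong V$ under which a vector $v\in V$ corresponds to the unique $G$-equivariant linear map sending the distinguished ``delta at the basepoint'' $\delta_x$ to $v$; setting $v = \delta$ produces $\iota_x$ with $\iota_x(\delta_x) = \delta$. Concretely, because $X$ is a $G$-torsor the map $g\mapsto gx$ is a bijection $G\to X$, so $\{g\delta_x : g\in G\}$ is a $\bC$-basis of $\rho_R = \Funct(X,\bC)$, and $\iota_x$ is forced to be the linear extension of $g\delta_x\mapsto g\delta$; $G$-equivariance is then immediate from this formula, so the only real content is the inner-product statement.

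Next I would record that $\{g\delta_x\}_{g\in G}$ is an \emph{orthonormal} basis of $\rho_R$ for $\langle\phi,\eta\rangle = \sum_{h\in G}\phi(hx)\overline{\eta(hx)}$: one has $(g\delta_x)(hx) = \delta_x(g^{-1}hx)$, which equals $1$ precisely when $g^{-1}hx = x$, i.e. $h=g$, and $0$ otherwise, whence $\langle g\delta_x, g'\delta_x\rangle$ is $1$ for $g=g'$ and $0$ for $g\neq g'$. By the standing hypothesis on $\delta$, the images $\{g\delta\}_{g\in G}$ obey the identical relations in $V$. Therefore, writing $\phi = \sum_g a_g\, g\delta_x$ and $\eta = \sum_g b_g\, g\delta_x$, we get $\langle\iota_x\phi,\iota_x\eta\rangle_V = \sum_{g,g'} a_g\overline{b_{g'}}\langle g\delta, g'\delta\rangle = \sum_g a_g\overline{b_g} = \langle\phi,\eta\rangle_{\rho_R}$, so $\iota_x$ preserves the inner product; injectivity follows formally, since a linear map preserving a positive-definite Hermitian form kills only $0$ (equivalently, the orthonormal family $\{g\delta\}$ is linearly independent, so $\iota_x$ sends a basis to a linearly independent family).

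I do not anticipate a genuine obstacle; the single point that warrants care is the bookkeeping of the $G$-action convention on $\Funct(X,\bC)$, so that $\{g\delta_x\}$ comes out orthonormal — this is exactly where the torsor hypothesis on $X$ is used, and it is what identifies the abstract Frobenius-reciprocity map with the explicit prescription $\delta_x\mapsto\delta$. It is also worth remarking that the construction depends on $x$ only through the identification of $X$ with $G$ it determines (hence the subscript in $\iota_x$): replacing $x$ by $gx$ replaces $\iota_x$ by its composite with the translation automorphism of $\rho_R$ induced by $g$, so all the $\iota_x$ have the same image and differ only by a $G$-automorphism of $\rho_R$.
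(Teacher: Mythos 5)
Your proposal is correct and follows essentially the same route as the paper, which (in the remark immediately after the proposition) invokes the universal property $\Hom(\rho_R,V)\simeq V$, $\delta_x\mapsto v$, to define the map and treats the isometry and injectivity as immediate from the orthonormality of the translates of $\delta_x$ and of $\delta$. Your write-up simply makes explicit the orthonormal-basis computation and the resulting injectivity, which the paper leaves implicit.
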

We recall that $\rho_R$ has the property that any element $v$ of any other representation there is a unique $G$-map
\[
\varphi_v: \rho_R\rightarrow V
\]
given by
\[
\varphi(\delta_x) = v.
\]
and thus
\[
\Hom(\rho_R, V) \simeq V.
\]

Let $\delta'$ be another element of $\rho_R$ which shares this property.  Then the induced map $\varphi_{\delta'}$ preserves the inner product.  Thus, the set of vectors with this property is acted upon transitively by the subgroup of $\mathbb{C}[G]^*$ which preserves the inner product.  Since
\[
\End(\rho_R) = \mathbb{C}[G] = \prod_{V \,\mathrm{ irreducible}} M_{\dim V}
\]
with the product acting in such a way that the projectors of this algebra act as orthogonal projectors on $\rho_R$ with $\langle\cdot,\cdot\rangle$, the subgroup of $\mathbb{C}[G]^*$ which preserves $\langle\cdot,\cdot\rangle$ is 
\[
U(\rho_R) = \prod_{V\,\mathrm{ irreducible}} U(\dim V);
\]
The stabilizer is empty.  Furthermore, we can carry through the same analysis of
\[
\rho_R^i = \Funct(\coprod_i X, \mathbb{C}).
\]
Here, we want, instead of one $\delta$-function, a set of $i$ $\delta$-functions $\delta_{x_1}, \dots, \delta_{x_i}$, all translates of whom form an orthonormal basis.  Noting that
\[
\End(\rho_R^i) = M_n(\mathbb{C}[G]) = \prod_{V\,\mathrm{irreducible}} M_{i\dim V}
\]
Again noting that the projections on $\rho_R^i$ associated to idempotents are orthogonal projections, we have
\[
U(\rho_R^i) = \prod_{V\,\mathrm{irreducible}} U(i\dim V).
\]
Thus, we obtain the
\begin{thm}\label{thm:deltachoices} In $\rho_R^i$ there is an  $i^2|G|$-dimensional real manifold of choices of $\delta$-functions if we require our $\delta$-functions to be orthonormal with respect to a $G$-invariant product; otherwise, if we just require our $\delta$-functions to have the property that their translates form a basis, we have an $i^2|G|$-dimensional complex manifold of choices of $\delta$-functions.
\end{thm}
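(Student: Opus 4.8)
The plan is to exhibit the set of admissible systems of $\delta$-functions in $\rho_R^i$ as a torsor under a Lie group of $G$-equivariant automorphisms of $\rho_R^i$, and then to read off the dimension of that group from the Wedderburn decomposition of $\mathbb{C}[G]$.

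First I would record the rigidity behind the construction: a tuple $(\delta_1,\dots,\delta_i)$ of vectors in $\rho_R^i=\Funct(\coprod_i X,\mathbb{C})$ whose $G$-translates form a basis of $\rho_R^i$ is precisely the same data as a $G$-equivariant \emph{automorphism} of $\rho_R^i$. Indeed, let $(\delta_{x_1^0},\dots,\delta_{x_i^0})$ be the standard tuple, one $\delta$-function at a chosen basepoint of each copy of $X$. By the universal property of the regular representation recalled above, the assignment $\delta_{x_j^0}\mapsto\delta_j$ extends uniquely to a $G$-equivariant endomorphism $\varphi$ of $\rho_R^i$, and $\varphi$ is invertible exactly when the $G$-translates of the $\delta_j$ span $\rho_R^i$, i.e.\ form a basis. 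Hence, after using the standard tuple as a basepoint, the collection of all admissible tuples is identified with the group of units $\bigl(\End_G(\rho_R^i)\bigr)^{\times}$.

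Next I would identify this group concretely. Since $\End_G(\mathbb{C}[G])=\mathbb{C}[G]$ acting by right multiplication, one has $\End_G(\rho_R^i)=M_i(\mathbb{C}[G])$, and since $\mathbb{C}[G]\cong\prod_{V}M_{\dim V}(\mathbb{C})$ with the product over the irreducible representations $V$ of $G$, we get
\[
\End_G(\rho_R^i)\cong\prod_{V}M_{i\dim V}(\mathbb{C}),
\qquad
\bigl(\End_G(\rho_R^i)\bigr)^{\times}\cong\prod_{V}\GL_{i\dim V}(\mathbb{C}).
\]
This is a complex Lie group of complex dimension $\sum_{V}(i\dim V)^2=i^2\sum_{V}(\dim V)^2=i^2|G|$, and the standard tuple shows the torsor is nonempty; this proves the ``basis'' assertion. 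For the ``orthonormal'' assertion I would fix the canonical $G$-invariant Hermitian inner product on $\rho_R^i$, the one for which the standard tuple and all its translates are orthonormal. A tuple whose translates are orthonormal corresponds under the bijection above to a $G$-automorphism carrying one orthonormal basis to another, hence to a unitary element, so the admissible tuples form a torsor under $U(\rho_R^i)$. Because the inner product is $G$-invariant, the central idempotents of $\mathbb{C}[G]$ act as orthogonal projections, the isotypic decomposition $\rho_R^i\cong\bigoplus_V V^{\oplus i\dim V}$ is orthogonal, and on each isotypic block the form is the standard Hermitian one; therefore
\[
U(\rho_R^i)\cong\prod_{V}U(i\dim V),
\]
a compact real Lie group of real dimension $\sum_{V}(i\dim V)^2=i^2|G|$, again a nonempty torsor.

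The only point that is not pure bookkeeping is the compatibility of the Wedderburn decomposition with the Hermitian structure --- that the central idempotents act as orthogonal projections and the isotypic pieces are mutually orthogonal --- since this is exactly what forces the relevant automorphism group to be $\prod_V U(i\dim V)$ rather than a larger unitary group, and it is the only place the $G$-invariance of the inner product enters. I would also flag that ``orthonormal with respect to a $G$-invariant product'' is to be read as orthonormality against the fixed canonical product above; that is the reading for which the count comes out to $i^2|G|$. Everything else --- nonemptiness, and the translation between $\delta$-systems and automorphisms --- is formal.
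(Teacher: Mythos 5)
Your argument is correct and is essentially the paper's own: both identify systems of $\delta$-functions with invertible (resp.\ unitary) elements of $\End_G(\rho_R^i)=M_i(\mathbb{C}[G])=\prod_V M_{i\dim V}(\mathbb{C})$, yielding $\prod_V \GL_{i\dim V}(\mathbb{C})$ or $\prod_V U(i\dim V)$ and hence the dimension count $i^2|G|$. Your explicit remark that ``$G$-invariant product'' must be read as the fixed canonical one is a worthwhile clarification, but it does not change the route.
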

\subsection{Automorphisms in the Unramified Case}
Let \[p: \Sigma'\rightarrow \Sigma\] be an unramified Galois covering of a genus $g$ surface $\Sigma$ with Galois group $G$ and let $n = |G|$.  In the procedure for producing the unramified Chevalley-Weil representation, we make the following choices:
\begin{enumerate}
\item A complex structure $J$ on $\Sigma$, which induces a canonical bundle $\Omega^1_{\Sigma_g}$ of holomorphic differential forms and pulled back structures $p^*J$ and $\Omega^1_{\Sigma'_{p^*J}}$ on $\Sigma'$, and a unique hyperbolic metric of constant negative curvature on both $\Sigma$ and $\Sigma'$ (we can in fact use any metric here; another natural metric would be the metric pulled back from the Fubini-Study metric on the canonical embedding).
\item A pulled back differential form $\omega\in p^*H^0(\Omega^1_{\Sigma_J}) \subset H^0(\Omega^1_{\Sigma'_{p^*J}})$.
\item A set of $g-1$ points $P_1, \dots, P_g \in \Sigma'$ in general position in the hyperplane of $\mathbb{P}H^0(\Omega^1_{\Sigma'_{p^*J}})$ defined by the pulled back differential form.
\item Identifications $\alpha_i: p^{-1}(P_i) \rightarrow X$, where $X$ is a fixed $G$-torsor.
\end{enumerate} 
The geometric Chevalley-Weil theorem asserts that this induces an isomorphism
\begin{center}
\mbox{\xymatrix{
H^0(\Omega^1_{\Sigma'_{p^*J}})/\mathbb{C}\omega \ar[r]^{\iota} & \mathbf{Funct}(\bigcup_i p^{-1}(P_i), \mathbb{C})\ar[rr]^{\bigcup_i \alpha_i^*}
& & \mathbf{Funct}(X^i, \mathbb{C})
}}
\end{center}
which can be rewritten, using the Hodge inner product given by $g$, as
\[
\mathbf{CW}: H^0(\Omega^1_{\Sigma'_{p^*J}}) \rightarrow \mathbf{Funct}(X^i, \mathbb{C}) \oplus \mathbb{C}\omega.
\]
A choice of complex structure gives us a complex conjugation involution
\[
\overline{\cdot}: H^1(\Sigma, \mathbb{C}) \rightarrow H^1(\Sigma, \mathbb{C})
\]
If we write, according to the Hodge decomposition,
\[
H^1(\Sigma', \mathbb{C}) = H^0(\Omega^1_{\Sigma'_{p^*J}}) \oplus H^1(\mathcal{O}_{\Sigma'_{p^*J}})
\]
then $\overline{\cdot}$ switches the two direct summands, and we have an isomorphism
\[
\mathbf{CW} \oplus \overline{\mathbf{CW}}: H^1(\Sigma', \mathbb{C})\rightarrow \mathbf{Funct}(\coprod_i X, \mathbb{C}) \oplus  \mathbb{C} \oplus \mathbf{Funct}(\coprod_i X, \overline{\mathbb{C}}) \oplus \overline{\mathbb{C}},
\]
where the conjugate copies indicate that the functions naturally are related by the complex conjugation induced from the complex structure.

Contained inside $H^1(X, \mathbb{C})$ is the $\mathbb{Z}$-module (note that it is not a lattice, because it is not cocompact) $H^1(X, \mathbb{Z})$, which $G$ must preserve.  Furthermore, there is a unimodular intersection symplectic form $i(\cdot, \cdot)$ on $\mathbb{Z}$.  Hodge theory tells us that the imaginary part of the Hermitian product induced by $g$ on this lattice must coincide with this intersection pairing.

We call the data \[\langle J, \omega, \{P_i\}, \{\alpha_i\}\rangle\] \textbf{trivialization data}, and the identification of cohomology with a regular representation it generates \[\mathbf{CW}_{\langle J, \omega, \{P_i\}, \{\alpha_i\}\rangle} \oplus \overline{\mathbf{CW}}_{\langle J, \omega, \{P_i\}, \{\alpha_i\}\rangle}.\]

Note that our method of constructing $\delta$-functions is not necessarily compatible with the inner product we are using; indeed, we do not know if the $\delta$-forms are orthogonal to one another in the Hodge inner product.  However, If we fix our complex structure $J$, then we see that the trivialization data come in a $g$-dimensional complex family (the data of $\{P_i\}$ and $\{\alpha_i\}$ are discrete once we've picked a $J$ and $\omega$).  The sets $\{P_i\}$ generically vary continuously with the choice of $\omega$, and so it makes sense to identify the $\Funct(\coprod_i X, \mathbb{C})$ obtained by a choice of $\omega$ and $\{P_i\}$ with that obtained by a close $\omega'$ and $\{P_i'\}$.  It then makes sense to say that there is really a \textit{discrete} choice of $\delta$-functions, according with what $\{P_i\}$ and $\{\alpha_i\}$'s are chosen, as the $\omega$'s come in a connected family; however, the possible choices for $\delta$-functions form a $g^2n$-dimensional family, as we computed in \ref{thm:deltachoices}.  Thus, we may deduce that the possible ``geometric'' trivializations obtained in this way, fixing a $J$, do not come close to exhausting all possible trivializations of the representation and form an essentially discrete set of possible trivializations.

It would be interesting to try to use certain special geometric trivializations of $H^1(\Sigma', \mathbb{C})$ (for instance, coming from curves with Jacobians with large endomorphism rings) to obtain trivializations of $H^1(\Sigma', \mathbb{Z})$; it does not seem clear that the representation on $H^1(\Sigma', \mathbb{Z})$ can always be trivialized.  However, there is a unique $\mathbb{Q}$-form of any representation over $\mathbb{C}$ by Hilbert's Satz 90 (see \cite{G}) so $H^1(\Sigma', \mathbb{Q})$ can always be trivialized.  This leads us to the natural
\begin{quest}
What conditions make a geometric trivialization descend to $H^1(\Sigma', \mathbb{Q})$ or $H^1(\Sigma', \mathbb{Z})$?  Can one pick a trivialization datum such that this occurs?
\end{quest}

\section{The Ramified Chevalley-Weil Theory: Topological Theory}\label{s:ramcw}
Now, let $p:\Sigma'\to\Sigma$ be a finite branched Galois cover of degree $n$ of a closed surface of genus $g\geq 0$.  Let $G$ denote the group of deck transformations.  We wish to understand $H_1(\Sigma',\bC)$ as a $G$-module.  Suppose that $B=\{q_1,\ldots, q_m\}$ is the branch locus of the covering map, with ramification degrees $n_i$, $1\leq i\leq m$.  Evidently each $q_i$ lifts to precisely $n/n_i$ points, and a small loop around a particular lift of $q_i$ wraps around $q_i$ $n_i$ times under $p$.

A branched cover becomes an unbranched cover after removing the branching locus.  The fundamental group $\pi_1(\Sigma\setminus B)\cong F_{2g+m-1}$ admits a splitting into a free product $F_h*F_p$, where $F_h$ denotes the {\bf hyperbolic} part of the fundamental group and $F_p$ denotes the {\bf parabolic} part.  Precisely, this means that if we endow $\Sigma\setminus B$ with a hyperbolic structure, there is a splitting of the fundamental group into a free product as above, where $F_h$ is generated by the homotopy classes of certain geodesics on $\Sigma$, and $F_p$ is generated by certain loops around points in $B$.  The former consists entirely of hyperbolic elements in $Isom(\bH^2)$, and the latter is generated by parabolic elements.

We may select a smoothly embedded disk $D\subset \Sigma$ such that $B\subset D$.  As is standard, $\pi_1(\Sigma)$ admits a presentation \[\langle a_1,\ldots, a_g,b_1,\ldots b_g\mid\prod_{i=1}^g [a_i,b_i]=1\rangle,\] and this can be done in a way so that the homotopy class of $\partial D$ is represented by the relation.  $D\setminus B$ on the other hand is free on $m$ generators, given by small loops $\ell_i$ around each puncture, and the homotopy class of $\partial D$ represented by $\ell_1\cdots\ell_m$.  Van Kampen's theorem says that then \[\pi_1(\Sigma\setminus B)\cong\langle a_1,\ldots, a_g,b_1,\ldots b_g, \ell_1\ldots,\ell_m\mid\prod_{i=1}^g [a_i,b_i]=\ell_1\cdots\ell_m\rangle.\]

The cover $\Sigma'\setminus p^{-1}(B)$ is given by a homomorphism $\pi_1(\Sigma\setminus B)\to G$.  In order to describe $\Sigma'\setminus p^{-1}(B)$ geometrically, we break $\Sigma'\setminus B$ into $\Sigma\setminus D$ and $D\setminus B$.  This will allow us to consider $\Sigma'\setminus p^{-1}(B)$ as a union of covers of $\Sigma\setminus D$ and $D\setminus B$, glued together over lifts of $\partial D$.  The gluing condition forces the following immediate lemma:

\begin{lemma}\label{t:tech}
A homomorphism $\pi_1(\Sigma\setminus B)\to G$ is given by two homomorphisms, $\pi_1(\Sigma\setminus D)\to G$ and $\pi_1(D\setminus B)\to G$ subject to the condition that the images of the homotopy class $[\partial D]$ be equal.
\end{lemma}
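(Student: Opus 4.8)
The plan is to recognize the decomposition $\Sigma\setminus B=(\Sigma\setminus D)\cup(D\setminus B)$ as a pushout of spaces along $\partial D$ and to apply the van Kampen theorem, after which the statement reduces to the universal property of an amalgamated free product in the category of groups. First I would fatten the picture slightly to obtain an honest open cover: choose a closed collar neighborhood $A\cong\partial D\times[-1,1]$ of $\partial D$ in $\Sigma$ with $A\cap B=\emptyset$, write $A^{\circ}$ for its interior, and set $U=(\Sigma\setminus D)\cup A^{\circ}$ and $V=(D\setminus B)\cup A^{\circ}$. Then $U$ and $V$ are open, $U\cup V=\Sigma\setminus B$, and $U\cap V$ deformation retracts onto the circle $\partial D$, so it is connected with $\pi_1(U\cap V)\cong\bZ$ generated by $[\partial D]$. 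Moreover $U$ deformation retracts onto $\Sigma\setminus D$ and $V$ onto $D\setminus B$, and under these retractions the inclusions $U\cap V\hookrightarrow U$ and $U\cap V\hookrightarrow V$ carry the generator of $\pi_1(U\cap V)$ to $[\partial D]\in\pi_1(\Sigma\setminus D)$ and to $[\partial D]\in\pi_1(D\setminus B)$ respectively.

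The van Kampen theorem then yields a natural isomorphism
\[
\pi_1(\Sigma\setminus B)\;\cong\;\pi_1(\Sigma\setminus D)\ast_{\bZ}\pi_1(D\setminus B),
\]
the amalgamation being over the cyclic subgroups generated by $[\partial D]$ on each side — this is exactly the amalgamated free product structure implicit in the van Kampen presentation displayed just above the lemma. By the universal property of a pushout of groups, a homomorphism $\pi_1(\Sigma\setminus B)\to G$ is the same datum as a pair of homomorphisms $\phi_1\colon\pi_1(\Sigma\setminus D)\to G$ and $\phi_2\colon\pi_1(D\setminus B)\to G$ whose restrictions to the amalgamated subgroup $\bZ$ agree; since that subgroup is generated by $[\partial D]$ in both factors, this agreement is equivalent to the single equation $\phi_1([\partial D])=\phi_2([\partial D])$. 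This is precisely the assertion of the lemma.

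There is essentially no real obstacle here; the only point requiring a moment's care is arranging a genuine open cover (rather than working with the closed disk $D$ directly) so that the van Kampen theorem applies cleanly, and checking that the connecting circle $U\cap V$ maps to the intended element $[\partial D]$ in each of the two pieces — both of which are immediate from the collar-neighborhood construction. For this reason the lemma is, as stated, immediate.
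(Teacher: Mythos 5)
Your proposal is correct and follows essentially the route the paper has in mind: the paper treats the lemma as immediate from the van Kampen decomposition of $\Sigma\setminus B$ along $\partial D$ into $\Sigma\setminus D$ and $D\setminus B$, and you simply make that explicit via a collar neighborhood and the universal property of the resulting pushout of fundamental groups. The only pedantic caveat is that calling it an amalgamated free product presumes $[\partial D]$ has infinite order in both factors (true except in degenerate cases such as $(S^2,\mathrm{point})$, which the paper excludes); in any case the pushout universal property you actually invoke gives the statement regardless.
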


Henceforth, assume that the pair $(\Sigma,B)$ is not $(S^2,point)$.  It is fairly straightforward to compute $H_1(\Sigma',\bC)$ and $H_1(\Sigma'\setminus p^{-1}(B),\bC)$ simply from the branching data, and $H_1(\Sigma'\setminus p^{-1}(B),\bC)$ is easy to understand as a $G$-module from unramified Chevalley-Weil theory.  The dimension of $H_1(\Sigma'\setminus p^{-1}(B),\bC)$ can be simply computed from the fact that $\pi_1(\Sigma\setminus B)$ is free of rank $2g+m-1$ and $|G|=n$: a standard Euler characteristic argument shows that $\pi_1(\Sigma'\setminus p^{-1}(B))$ must have rank $n(2g+m-2)+1$.  Furthermore, the Lefschetz fixed point theorem implies that as a $G$-module, $H_1(\Sigma'\setminus p^{-1}(B),\bC)$ is $2g+m-2$ copies of the regular representation together with one copy of the trivial representation.

To understand $H_1(\Sigma',\bC)$ as a vector space, we simply note that $p^{-1}(B)$ consists of \[k=\sum_{i=1}^m n/n_i\] points, so that filling them all in shows that the dimension of $H_1(\Sigma',\bC)$ is just $n(2g+m-2)+2-k$.  The effect of filling in the punctures on the $G$-module structure is simply to kill certain direct summands of the abstract $G$-module $H_1(\Sigma'\setminus p^{-1}(B),\bC)$.

Precisely, decompose $H_1(\Sigma'\setminus p^{-1}(B),\bC)$ as $\bigoplus_{\chi} d_i\cdot V_{\chi}$, where the subscript $\chi$ ranges over irreducible characters of $G$ and the $d_i$'s are the multiplicities with which they occur.  For each point $q\in B$, the $G$-orbit gives rise to a $G$-submodule of $H_1(\Sigma'\setminus p^{-1}(B),\bC)$ which is killed upon filling in $p^{-1}(q)$.

As a $G$-set, $p^{-1}(q)$ is just $G/Stab(*)$ for some $*\in p^{-1}(q)$.  Clearly, the associated $G$-submodule of $H_1(\Sigma'\setminus p^{-1}(B),\bC)$ induced by the $G$-set $p^{-1}(q)$ factors through a $G/H$-module, where \[H=\bigcap_{*\in p^{-1}(q)} Stab(*).\]  If $|B|\geq 2$ then we may assume that the map from the free vector space on $p^{-1}(q)$ into $H_1(\Sigma'\setminus p^{-1}(B),\bC)$ induced by puncturing is an injection, and thus gives a subrepresentation of $G$, whose character $\chi$ we can compute.  Let $g\in G/H$.  For each $*\in p^{-1}(q)$, $g$ stabilizes $*$ precisely if $g\in Stab(*)$.  We see that $p^{-1}(q)$, despite being a transitive $G/H$-set, breaks up further into orbits under the action of powers of $g$.  We have a map of $G/H$-sets \[G/H\to \prod_{*\in p^{-1}(q)} G/Stab(*),\] which takes an element $g$ to each of its cosets modulo $Stab(*)$.  For each point $*$, either $g\in Stab(*)$ or not.  If it is, then $*$ contributes a $1$ to $\chi(g)$.  If not, then $*$ is in a nontrivial cyclic orbit of $\langle g\rangle$.  Therefore, $\chi(g)$ is just the number of points $*\in p^{-1}(q)$ for which $g\in Stab(*)$.  In particular, it is equal to the number of singleton orbits for the $\langle g\rangle$-action.  By construction, $\chi(g)\leq |p^{-1}(q)|$, with equality only if $g$ is the identity.

If $|B|=1$, then the map from the free vector space on $p^{-1}(q)$ to $H_1(\Sigma'\setminus p^{-1}(B),\bC)$ induced by puncturing has a one-dimensional kernel.  In $G$, the image of the element $\prod_{i=1}^g [a_i,b_i]$ has some order, which is the ramification degree about the branch point.  The kernel is given by \[\bC\cdot\langle \sum_{*\in p^{-1}(q)} *\rangle.\]  On the level of representation theory, this is the same situation as the case when $|B|\geq 2$, simply with one trivial representation factored out.  In the general case, we patch up the punctures on $\Sigma'\setminus p^{-1}(B)$ by patching up the preimages of punctures in $\Sigma$ one at a time.  There is certainly an ambiguity as to in which order we patch up the punctures.  The ambiguity is really just a choice of trivial representation in the abstract description of $H_1(\Sigma'\setminus p^{-1}(B),\bC)$ as a $G$-module.

We wish to understand the $G$-action on the cover in more detail.  To this end, we have the following theorem:

\begin{thm}
Let $\Sigma'\to\Sigma$ be a branched cover as in the setup, let $*_i\in p^{-1}(q_i)$ be a choice of points in the fiber of each branch point, and let \[V=\bigoplus_{i=1}^m V_i\] be the direct sum of the permutation representations of $G$ on $G/Stab(*_i)$.  Then, the $G$-module $H_1(\Sigma',\bC)$ is isomorphic to the $G$-module \[H_1(\Sigma'\setminus p^{-1}(B),\bC)/(V/\mathbf{1}),\] where $\mathbf{1}$ is the trivial representation of $G$.
\end{thm}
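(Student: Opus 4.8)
The plan is to compare $H_1(\Sigma',\bC)$ with $H_1(\Sigma'\setminus p^{-1}(B),\bC)$ by means of the Mayer--Vietoris sequence for the decomposition of $\Sigma'$ into the complement of the fiber $p^{-1}(B)$ together with small disks filling in those punctures. Write $\Sigma_0'=\Sigma'\setminus p^{-1}(B)$. Up to homotopy equivalence, $\Sigma_0'$ is a compact bordered surface whose boundary circles are in bijection with $p^{-1}(B)$, and $\Sigma'$ is recovered by gluing a disk $D_j$ along the $j$-th boundary circle. Since $G$ is finite we may choose these disks (and a regular neighborhood of the bordered surface) to be $G$-invariant; then the deck action restricts to both pieces of the decomposition, it permutes the disks $\{D_j\}_{j\in p^{-1}(B)}$ exactly as it permutes $p^{-1}(B)$, and, the deck transformations being orientation-preserving, it carries the boundary class $c_j$ of $D_j$ to $c_{g(j)}$ without sign. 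Hence the whole Mayer--Vietoris sequence is a sequence of $\bC[G]$-modules.

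First I would write this sequence over $\bC$ and collapse it. The intersection of the two pieces is homotopy equivalent to $\coprod_j S^1$; using $H_2(\Sigma_0')=0$ (as $\Sigma_0'$ is noncompact), the contractibility of the disks, and the fact that the map $H_0(\coprod_j S^1)\to H_0(\Sigma_0')\oplus H_0(\coprod_j D_j)$ is injective (its disk component is an isomorphism onto $\bigoplus_j H_0(D_j)$), the sequence reduces to the four-term exact sequence of $G$-modules
\[
0\to H_2(\Sigma',\bC)\xrightarrow{\partial}\bigoplus_{j\in p^{-1}(B)}\bC\langle c_j\rangle\to H_1(\Sigma_0',\bC)\to H_1(\Sigma',\bC)\to 0.
\]
Regrouping the index set as $p^{-1}(B)=\coprod_{i=1}^m p^{-1}(q_i)$ and using the $G$-equivariant identification $p^{-1}(q_i)\cong G/Stab(*_i)$ identifies the second term with $V=\bigoplus_{i=1}^m V_i$. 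So it remains only to identify the image of $\partial$.

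The second step is to compute $\partial$. Since $\Sigma'$ is a closed, connected, oriented surface, $H_2(\Sigma',\bC)\cong\bC$ is spanned by the fundamental class $[\Sigma']$, and, the $G$-action being orientation-preserving, this is a copy of the trivial representation $\mathbf{1}$. The standard description of the Mayer--Vietoris connecting map shows that the ``disk part'' of a $2$-cycle representing $[\Sigma']$ is $\sum_j[D_j]$, so that $\partial[\Sigma']=\pm\sum_{j\in p^{-1}(B)}c_j$; this is precisely the single relation among the puncture loops that holds in the homology of a punctured surface. Under the identification of the second term with $V$ this element is the ``all-ones'' vector $\sum_{i=1}^m\nu_i$, where $\nu_i\in V_i$ is the sum of the coset basis of $V_i$; it is nonzero and $G$-invariant, so the image of $\partial$ is the diagonally embedded copy of $\mathbf{1}$ inside $V$. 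This is where the quotient by $\mathbf{1}$ in the statement comes from: filling a single fiber $p^{-1}(q_i)$ kills all of $V_i$, but filling in all of them leaves exactly one trivial summand alive, as foreshadowed in the preceding discussion of patching up punctures one at a time.

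Finally, exactness at the second term gives $\mathrm{image}\bigl(V\to H_1(\Sigma_0',\bC)\bigr)\cong V/\ker\partial\cong V/\mathbf{1}$, and exactness at the last two terms says $H_1(\Sigma_0',\bC)\to H_1(\Sigma',\bC)$ is surjective with kernel equal to that image. Therefore $H_1(\Sigma',\bC)\cong H_1(\Sigma_0',\bC)/(V/\mathbf{1})$ as $G$-modules, which is the assertion; one checks consistency with the dimension count of the preceding discussion since $\dim H_1(\Sigma_0',\bC)-\dim H_1(\Sigma',\bC)=|p^{-1}(B)|-1=\dim(V/\mathbf{1})$. The only step with genuine content is the identification of $\mathrm{image}(\partial)$; everything else is the formal fact that filling in the punctures of a surface kills exactly the span of the puncture loops, made $G$-equivariant by a $G$-invariant choice of decomposition.
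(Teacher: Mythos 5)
Your argument is correct. The core fact is the same one the paper uses --- $H_1(\Sigma',\bC)$ is $H_1$ of the punctured surface modulo the span of the puncture loops, and that span is the permutation module $V$ subject to the single relation that the sum of all puncture classes vanishes --- but your execution differs from the paper's in a way worth noting. The paper identifies the subrepresentation generated by each fiber $p^{-1}(q_i)$ via the permutation character of $G$ on $G/Stab(*_i)$ and handles the one missing trivial summand by an informal puncture-by-puncture filling argument with a case split ($|B|\geq 2$ versus $|B|=1$), in which the location of the surviving trivial representation is described only as ``a choice'' reflecting the order of patching. You instead package everything into one $G$-equivariant Mayer--Vietoris sequence
\[
0\to H_2(\Sigma',\bC)\xrightarrow{\ \partial\ }\bigoplus_{j\in p^{-1}(B)}\bC\langle c_j\rangle\to H_1(\Sigma'\setminus p^{-1}(B),\bC)\to H_1(\Sigma',\bC)\to 0,
\]
identify the middle term with $V$ via $p^{-1}(q_i)\cong G/Stab(*_i)$, and pin down the kernel of $V\to H_1(\Sigma'\setminus p^{-1}(B),\bC)$ canonically as $\partial[\Sigma']=\pm\sum_j c_j$, a single diagonal copy of $\mathbf{1}$ (trivial because $G$ preserves orientation). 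This buys uniformity (no case analysis on $|B|$, and the exclusion of $(S^2,\mathrm{point})$ is never needed), a precise answer to where the leftover trivial representation comes from (it is the image of the fundamental class, removing the paper's ``ambiguity'' remark), and an argument that works verbatim with $\bZ$ or any coefficients, whereas the paper's character-level bookkeeping is specific to $\bC$. Your dimension check at the end matches the paper's count $\dim H_1(\Sigma',\bC)=n(2g+m-2)+2-k$. The only points to keep explicit are the ones you already flag: the disks and collars can be chosen $G$-invariantly, and the boundary circles are oriented compatibly with the global orientation so that $g\cdot c_j=c_{g(j)}$ with no sign.
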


\begin{proof}
 Let $X$ denote the $m$-times punctured disk.  A finite branched cover $p$ of the disk with branching locus equal to those $m$ points is given by a map $F_m\to G$, where $G$ is a finite group and the loops encircling each puncture are sent to nontrivial elements of $G$.  Fix a puncture $q$ and the homotopy class $\ell$ of a small loop going around $q$.  On the $G$-set $p^{-1}(q)$, the action of the image $h$ of $\ell$ in $G$ is clearly trivial.  Notice on the other hand that if $q$ is a puncture in $X$ whose image in $G$ is trivial, then there are precisely $|G|$ lifts of a little loop going around $q$, and the action of $G$ on these loops is just the regular $G$ action on itself.

If $|B|\geq 2$, the character of $G$ acting on the free vector space $V$ on $p^{-1}(q)$, which sits naturally inside of $H_1(\Sigma'\setminus p^{-1}(B),\bC)$, is just the permutation character of $G$ acting on $G/Stab(*)$ for any $*\in p^{-1}(q)$.  Clearly, this representation is a quotient of the regular representation.  If $|B|=1$, then we have a permutation character minus a trivial character, as explained above.  It is evident that the trivial representation appears as a direct summand of the permutation character, as all the values of the permutation character are nonzero integers, at least one of which is positive. 
\end{proof}

\subsubsection{Geometric permutation representations of finite groups}
The representation $V$ above is a direct sum of permutation representations associated to transitive $G$-sets.  A natural question is which permutation representation of $G$ can be realized geometrically.  In particular, if $*\in p^{-1}(q)$ for a puncture $q$ of the disk, which subgroups of $G$ can be realized as stabilizers of $*$, up to conjugacy?  We call permutation representations that arise in such a way {\bf topological}.

Certainly we cannot obtain all permutation representations this way.  For instance, let $Y$ be a once punctured disk.  If $G$ is a finite group, then the only surjective homomorphisms from $\pi_1(Y)$ to $G$ can be constructed when $G$ is cyclic.  In that case, we may write $G=\bZ/n\bZ$, and we may take a generator $y\in \pi_1(Y)$ to be sent to $1$.  Then, the puncture has exactly one lift and its stabilizer is the whole group.  In particular, the homology representation of $\bZ/n\bZ$ associated to this cover is trivial.

Note that a small neighborhood of a puncture $*$ on a multiply punctured surface $X$ can be taken to be homeomorphic to $Y$ as above.  We may assume that $Stab(*)$ acts by covering transformations on this neighborhood, and is therefore cyclic.  It follows that for any puncture $q$ on the base and $*\in p^{-1}(q)$, the group $Stab(*)$ is conjugate to the image of the homotopy class of a small loop $\ell$ encircling $q$.  We therefore have:

\begin{lemma}
Let $\rho$ be a topological permutation representation of a finite group $G$.  Then $G$ is the associated permutation representation on the $G$-set $G/H$, where $H$ is a cyclic subgroup of $G$.
\end{lemma}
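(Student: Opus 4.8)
The plan is to read the lemma off the local structure of the cover near a branch point, which is essentially what the discussion preceding the statement already establishes. By definition a topological permutation representation $\rho$ is the permutation representation of $G$ on a fibre $p^{-1}(q)$ over a branch point $q$ of a branched Galois cover $p\colon \Sigma'\to\Sigma$ (or of a cover of a punctured disk, as in the special case treated just above). Since the total space is connected, $p^{-1}(q)$ is a transitive $G$-set, so choosing a lift $*\in p^{-1}(q)$ gives an isomorphism of $G$-sets $p^{-1}(q)\cong G/Stab(*)$, and $\rho$ is the associated permutation representation. Hence it suffices to prove that $H:=Stab(*)$ is cyclic.

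To do this I would choose a punctured-disk neighbourhood $N$ of $q$, small enough to miss the rest of the branch locus, so that $N$ is homeomorphic to the once-punctured disk $Y$ with $\pi_1(N)\cong\bZ$. Over $N$ the map $p$ is an honest covering, so $p^{-1}(N)$ is a disjoint union of connected coverings of $N$; let $N'$ be the component containing $*$, which is then the unique point of $p^{-1}(q)$ lying in $N'$. A connected covering of $N$ corresponds to a subgroup of $\pi_1(N)\cong\bZ$, so $N'\to N$ is the $n_q$-fold cyclic cover, where $n_q$ is the ramification degree of $p$ at $q$. An element $g\in G$ fixes $*$ exactly when it preserves the component $N'$, and any such $g$ acts on $N'$ as a deck transformation over $N$; thus $H$ embeds into the cyclic deck group $\bZ/n_q\bZ$ of $N'\to N$. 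Finally the orbit count $|p^{-1}(q)|=n/n_q$ recorded at the start of this section forces $|H|=n_q$, so $H\cong\bZ/n_q\bZ$ is cyclic. When the loop about $q$ maps to the identity of $G$ this degenerates to $N'=N$, $H$ trivial, and $\rho$ the regular representation, which is still of the asserted form with $H$ the trivial, hence cyclic, subgroup. Setting $H=Stab(*)$ then completes the argument.

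The one step that needs genuine care is the last identification: a priori $H=Stab(*)$ could be a proper subgroup of the deck group of the local cyclic cover $N'\to N$, and it is precisely the Galois hypothesis on $p$ — equivalently the orbit–stabilizer identity $|Stab(*)|=|G|/|p^{-1}(q)|=n_q$ — that rules this out. Everything else is a routine unwinding of definitions, and in fact an alternative route is to invoke directly the observation already made above that $Stab(*)$ is conjugate to the image in $G$ of the homotopy class of a small loop encircling $q$, which generates a cyclic group simply because that loop generates $\pi_1(Y)\cong\bZ$.
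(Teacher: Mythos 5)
Your proof is correct and follows essentially the same route as the paper: identify $\rho$ with the permutation representation on $p^{-1}(q)\cong G/Stab(*)$ and show $Stab(*)$ is cyclic by restricting to a small punctured-disk neighbourhood of $*$, where it acts as a group of deck transformations of a cover of the once-punctured disk (equivalently, it is conjugate to the image of a small loop encircling $q$, whose class generates $\pi_1\cong\bZ$). Your additional verification that $|Stab(*)|=n_q$ via orbit--stabilizer is a harmless refinement the paper leaves implicit.
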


Now let $X$ be a multiply punctured surface with free fundamental group $F$, let $G$ be a finite group and $\phi:F\to G$ a homomorphism.  The only relevant part of the group $G$ is the image of $\phi$.  Each element of $F$ has an image which generates a finite cyclic subgroup.  If a generator $x\in F$ is represented by a small loop about a puncture $q$, then $\phi(\langle x\rangle)$ will be the conjugacy class of $Stab(*)\in p^{-1}(q)$.

In order to analyze the topological permutation representations that can occur and avoid the technicalities introduced by lemma \ref{t:tech} we will assume for now that $X$ is just a multiply punctured disk with at least two punctures, and we take small loops about the punctures to be the generators of $\pi_1(X)$.  Let $p$ be any finite cover with Galois group $G$.  Gathering all the information we have accrued so far implies:

\begin{thm}
Let $G$ be a finite group and $\rho$ a permutation representation.  Then $\rho$ is topological if and only if there is a surjective map $F\to G$ from a free group $F$ of finite rank and a cyclic subgroup $H$ that is the image of a generator of $F$ such that $\rho$ is the permutation representation associated to the $G$-set $G/H$.
\end{thm}

In the general case when $X$ is a multiply punctured surface, the theorem above holds provided that the free group has generators chosen in such a way that small loops $\ell_i$ about the punctures are all generators (when the number of these punctures is greater than $1$, of course), and the condition on the image of the homomorphism $\phi$ to $G$ is that $\prod_i \phi(\ell_i)$ be expressible as a product of precisely $g$ commutators, where $g$ is the genus of $X$.

\subsubsection{Hodge theory and branched covers}
Recall that in the unramified case of Chevalley-Weil theory, the Atiyah-Bott fixed point theorem implies that the Galois group $G$ has isomorphic representations on $H^{1,0}(\Sigma')$ and on $H^{0,1}(\Sigma')$, after fixing a complex structure on $\Sigma$ that is then pulled back to $\Sigma'$.  These two representations are a priori dual to each other, and this is all that we can get in the case of a branched cover.

Recall that the Atiyah-Bott fixed point theorem says that if $g$ is an automorphism of a Riemann surface, then \[\tr(g\mid H^{0,1})=1-\sum_{g\cdot z=z} (1-g'(z))^{-1}.\]  As before, let $D\subset\Sigma$ be a smoothly embedded disk that contains the ramification points of the cover.  If we puncture at the ramification points and choose a homomorphism from the fundamental group to $G$, the only condition is that $[\partial D]$ has trivial image in $G$.

\begin{prop}
Let $p:\Sigma'\to\Sigma$ be a finite abelian Galois cover branched over at most two points.  Then the $G$-modules $H^{1,0}(\Sigma')$ and $H^{0,1}(\Sigma')$ are isomorphic.
\end{prop}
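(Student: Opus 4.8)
The plan is to reduce the isomorphism $H^{1,0}(\Sigma')\cong H^{0,1}(\Sigma')$ to the statement that the character $\chi^{0,1}$ of $G$ on $H^{0,1}(\Sigma')$ is real-valued. Indeed, the complex-conjugation map on $H^1(\Sigma',\bC)$ is conjugate-linear, commutes with $G$ (which acts integrally on $H^1$), and interchanges the two Hodge summands; hence $\chi^{1,0}(g)=\overline{\chi^{0,1}(g)}$, i.e.\ $H^{1,0}(\Sigma')$ is the $G$-dual of $H^{0,1}(\Sigma')$, as already noted before the proposition. If $\chi^{0,1}$ is real-valued, then $\chi^{1,0}=\chi^{0,1}$ and the two $G$-modules coincide. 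So the goal becomes: compute $\chi^{0,1}$ via the Atiyah--Bott formula and check it takes real (in fact integer) values.

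Next I would pin down the ramification geometry forced by the hypotheses. Since $G$ is abelian, the product of the local monodromies at the branch points equals the image in $G$ of $\prod_i[a_i,b_i]$ (notation as in the presentation of $\pi_1(\Sigma\setminus B)$ above), hence is trivial; in particular if $|B|\le 1$ the cover is unramified and the claim is the unramified Chevalley--Weil theorem from Section~\ref{s:geometriccw}. So I may assume $B=\{q_1,q_2\}$ with local monodromies $h_1,h_2$ satisfying $h_1h_2=e$, so $h_2=h_1^{-1}$. Consequently the two inertia groups coincide with a single cyclic subgroup $C=\langle h_1\rangle$ of some order $e>1$; each $q_i$ has exactly $n/e$ preimages, and (conjugate subgroups of an abelian group being equal) the inertia group at each of them is exactly $C$.

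Then I would run the Atiyah--Bott fixed point formula $\tr(g\mid H^{0,1})=1-\sum_{g\cdot z=z}(1-g'(z))^{-1}$. Any fixed point of a nontrivial $g$ lies over $B$ and has inertia $C$, so for $g\notin C$ there are no fixed points and $\chi^{0,1}(g)=1$. For $g=h_1^k\in C\setminus\{e\}$ the fixed locus is all $2n/e$ points over $q_1$ and $q_2$. On the tangent line at a point over $q_i$ the group $C$ acts through a faithful character $\psi_i$; a short commutator computation (using that $G$ is abelian, so $dc_{g_0 z}=dg_0\circ dc_z\circ dg_0^{-1}=dc_z$ as scalars) shows $\psi_i$ does not depend on the chosen point of the fiber, and since the tangent character is normalized in the same way at both branch points we have $\psi_1(h_1)=\psi_2(h_2)$; as $h_2=h_1^{-1}$ this gives $\psi_2=\psi_1^{-1}=\overline{\psi_1}$ on $C$. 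Writing $\omega=\psi_1(h_1)^k\neq 1$, the formula becomes
\[
\chi^{0,1}(h_1^{k})=1-\frac{n}{e}\Big(\frac{1}{1-\omega}+\frac{1}{1-\omega^{-1}}\Big)=1-\frac{n}{e},
\]
using the identity $\tfrac{1}{1-\omega}+\tfrac{1}{1-\omega^{-1}}=1$. Together with $\chi^{0,1}(e)=\dim H^{0,1}(\Sigma')$ this exhibits $\chi^{0,1}$ as integer-valued, hence real, and the proof is complete.

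The substantive point — more than a genuine obstacle — is the rigidification in the middle step: recognizing that abelianness together with at most two branch points forces a single cyclic inertia group with mutually inverse local monodromies at $q_1$ and $q_2$, so that the two branch points contribute complex-conjugate terms to the Atiyah--Bott sum, which then pair off via $\tfrac{1}{1-\omega}+\tfrac{1}{1-\omega^{-1}}=1$. The remaining fixed-point and tangent-character bookkeeping (constancy of $\psi_i$ along a fiber, the shared normalization $\psi_1(h_1)=\psi_2(h_2)$) is routine but should be stated carefully, and it is exactly here that the hypothesis of two branch points (rather than three or more) is used.
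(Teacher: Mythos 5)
Your proposal is correct and follows essentially the same route as the paper's proof: both reduce the statement to reality of the Atiyah--Bott character, using that for an abelian group with two branch points the local monodromies are mutually inverse, so the derivatives at fixed points over the two branch points are conjugate roots of unity and their contributions cancel to a real value. Your write-up merely makes explicit what the paper leaves implicit (the single cyclic inertia group $C$, the absence of fixed points for $g\notin C$, and the exact value $\chi^{0,1}(g)=1-n/e$ via $\tfrac{1}{1-\omega}+\tfrac{1}{1-\omega^{-1}}=1$).
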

\begin{proof}
For covers branched over at most one point there is nothing more to show.  Suppose that $p$ is branched over exactly two points.  Let $\alpha_1$ and $\alpha_2$ be small loops around the ramification points.  Then the images of the homotopy classes of these two loops are inverse to each other.  The derivative of a deck transformation fixing a lift of one of the ramification points is a root of unity.  Any such deck transformation fixes any lift of the other ramification point, and the derivative must also be the conjugate root of unity.  Plugging this fact into the Atiyah-Bott fixed point theorem shows that the character of any deck transformation is conjugation invariant.
\end{proof}

When $G$ is nonabelian we no longer have the condition that $[\partial D]$ has trivial image, so it is not difficult to find branched coverings where the holomorphic and anti-holomorphic parts of the homology are only dual to each other.  If more ramification points are allowed, even cyclic covers can be made to have nonisomorphic holomorphic and anti-holomorphic homology representations.  An easy example is a degree $11$ cyclic branched cover of a torus, branched over $3$ points, where we send loops about the punctures to $2$, $4$ and $5\pmod{11}$.  In this case, each ramification point has exactly one lift of $\Sigma'$, so that any generator of $\bZ/11\bZ$ fixes each lift.  An easy computation shows that the generator $1$ has a character which is not real.

\subsubsection{The dilatation of a pseudo-Anosov homeomorphism}
The main reference for well-known unjustified statements in this subsection is the volume \cite{FLP}.  It is well-known from the Thurston theory of surface homeomorphisms that if $\psi\in Mod(\Sigma)$ is a pseudo-Anosov map then there is an invariant pair of transverse measured foliations on $\Sigma$, $F^s$ and $F^u$, which are preserved by some representative in the isotopy class of $\psi$, which we will not distinguish from $\psi$ notationally.  Together, these measured foliations provide natural coordinates on $\Sigma$ that patch together to give a projective class of holomorphic quadratic differentials $\bR^*\cdot f(z)\, dz^2$ on $\Sigma$ in such a way that the foliations become the horizontal and vertical directions for $\bR^*\cdot f(z)\, dz^2$.  Fixing one such nonzero differential $\eta$, the foliations will not be orientable unless $\eta$ can be globally expressed as the square of an abelian differential $\omega$.  Taking the de Rham cohomology class $[\omega]$ of $\omega$, $[\omega]$ becomes a unique (up to scale) eigenvector for the action of $\psi$ on the cohomology with eigenvalue $K$, the dilatation of $\psi$.  It is well known (from the work of the first author, for instance) that no larger eigenvalue is possible for the action of $\psi$ on the real cohomology of $\Sigma$.

In general, $\eta$ will only locally be the square of an abelian differential.  The spectral radius of the map of $\psi$ on real cohomology will have spectral radius strictly smaller than $K$.  In precise terms:

\begin{thm}
Let $F^{u,s}$ be the unstable and stable foliations associated to a pseudo-Anosov mapping class $\psi$ with dilatation $K$.  $F^{u,s}$ remain nonorientable on every finite cover of $\Sigma$ if and only if the spectral radius of $\psi$'s action on the homology on every finite cover is strictly less than $K$.
\end{thm}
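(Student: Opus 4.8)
The plan is to isolate a statement about a \emph{single} surface and then get the theorem by applying it to every finite cover. The single-surface statement I want is: for a pseudo-Anosov $\phi$ of a closed surface $S$ with dilatation $L$ and invariant foliations $G^{u,s}$, the spectral radius of $\phi^*$ on $H^1(S;\bR)$ equals $L$ if $G^{u,s}$ are orientable and is strictly less than $L$ otherwise. Granting this, the theorem follows by bookkeeping: every finite cover of $\Sigma$ is dominated by a finite characteristic cover; to a characteristic cover a power $\psi^N$ always lifts; the lift is again pseudo-Anosov, with dilatation $K^N$, and its invariant foliations are exactly the pullbacks of $F^{u,s}$; also $H_1$ and $H^1$ are dual $\psi$-modules, hence have the same spectral radius, and the (normalized) spectral radius on an intermediate cover is dominated by that on a dominating characteristic cover via the injective equivariant pullback on $H^1$. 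Thus ``$F^{u,s}$ nonorientable on the cover $\Sigma'$'' is equivalent, by the single-surface statement applied to the lift, to ``the spectral radius on $H^1(\Sigma';\bR)$ is $<K^N$'', i.e.\ to the normalized spectral radius of $\psi$ on $\Sigma'$ being $<K$; quantifying over all finite covers yields the theorem.

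For the single-surface statement, the orientable case is the computation already recalled in the text: if $G^{u,s}$ are orientable then the defining quadratic differential $\eta$ is globally $\omega^2$ for an abelian differential $\omega$, and in the flat coordinate $w=x+iy$ the pseudo-Anosov acts by $\phi^*\,dx = L\,dx$, $\phi^*\,dy = L^{-1}\,dy$ with $dx=\mathrm{Re}\,\omega$, $dy=\mathrm{Im}\,\omega$ globally defined closed $1$-forms; hence $[\mathrm{Re}\,\omega]$ is an eigenvector of $\phi^*$ on $H^1(S;\bR)$ with eigenvalue $L$, and it is nonzero since $[\mathrm{Re}\,\omega]$ and $[\mathrm{Im}\,\omega]$ span a plane nondegenerate for the intersection pairing (their wedge integrates to the flat area). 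With the already known upper bound $L$ for the spectral radius, this gives equality. The work is therefore entirely in the nonorientable case: I must show $\phi^*$ on $H^1(S;\bR)$ has no eigenvalue of modulus $L$.

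Here the approach is to pass to the orientation double cover $\rho\colon \hat S\to S$ of the square-root-of-$\eta$ structure --- unramified away from the zeros of $\eta$, ramified exactly over the zeros of odd order --- on which $\rho^*\eta = \hat\omega^2$ for an honest abelian differential $\hat\omega$ and to which a power of $\phi$ lifts to a pseudo-Anosov $\hat\phi$ of the same dilatation $L$. If $\tau$ is the deck involution then $\tau^*\hat\omega = -\hat\omega$, so $[\mathrm{Re}\,\hat\omega]$ lies in the anti-invariant summand $H^1(\hat S;\bR)^-$, which is complementary to $\rho^*H^1(S;\bR)=H^1(\hat S;\bR)^+$ (the transfer argument identifying invariants with the image of $\rho^*$ works over $\bR$ even when $\rho$ is branched). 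The crux is that the $L$-eigenspace of $\hat\phi^*$ on $H^1(\hat S;\bR)$ is exactly the line $\bR[\mathrm{Re}\,\hat\omega]$: representing a class in it in the almost-everywhere-defined flat coframe of $\hat\omega$ as $a\,dx+b\,dy$, the eigenvalue equation forces $a\circ\hat\phi = a$ and $b\circ\hat\phi = L^{2}b$, so $a$ is constant by ergodicity of the pseudo-Anosov and $b=0$ because $L^{2n}b$ cannot remain bounded in $L^2$; this is the homological shadow of the uniqueness of the invariant transverse measure. Consequently the $L$-eigenspace of $\hat\phi^*$ meets $H^1(\hat S;\bR)^+$ trivially, so $\hat\phi^*$ on $H^1(\hat S;\bR)^+$, equivalently $\psi^*$ on $H^1(S;\bR)$, has no eigenvalue of modulus $L$, which with the upper bound completes the single-surface statement.

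I expect the main obstacle to be this one-dimensionality of the top eigenspace together with the analytic regularity needed to make the flat-coframe argument rigorous: one must represent an $L$-eigenclass by a closed $1$-form whose coefficients in the measurable flat coframe lie in $L^2$ (or are otherwise controlled), for which taking a weak limit of $L^{-n}(\hat\phi^n)^*$ of a smooth representative --- rather than a naive harmonic representative, which $\hat\phi^*$ does not preserve --- seems to be the right device. The remaining points are routine but should be stated with care: that a power of $\psi$, not $\psi$ itself, is what lifts to a general finite cover (so the spectral radius must be normalized by the relevant root); that the orientation double cover can be \emph{branched}, hence is not itself one of the finite covers quantified over --- which is precisely why the hypothesis asks that $F^{u,s}$ remain nonorientable on \emph{every} finite cover, since an odd-order singularity, which no unramified cover can remove, forces nonorientability on all of them; and that $\psi$ genuinely preserves the orientation-double-cover structure and therefore lifts after passing to a suitable power.
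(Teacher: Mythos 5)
Your skeleton is the same as the paper's: the easy direction via an explicit top eigenclass attached to an orientation of the foliations (the paper uses the Ruelle--Sullivan class, you use $[\mathrm{Re}\,\omega]$), and the hard direction by passing to the double cover branched over the odd-order singularities, locating the distinguished eigenclass in the anti-invariant summand of the transfer splitting, and concluding that the invariant summand --- the base cohomology --- misses the top of the spectrum; your bookkeeping over covers (characteristic covers, lifting only a power of $\psi$, normalizing the spectral radius) is in fact more careful than the paper's. The difference is how the decisive uniqueness statement on the orienting cover is justified: the paper quotes Thurston--McMullen for simplicity of the dilatation for orientable foliations and adds an intersection-number/Perron--Frobenius cone argument for strict domination, whereas you attempt a self-contained ergodic-theoretic proof in the flat coframe of $\hat\omega$, and that is where your argument has genuine gaps.

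First, a logical jump: from ``the $K$-eigenspace of $\hat\phi^*$ is exactly $\bR[\mathrm{Re}\,\hat\omega]$, which is anti-invariant'' you conclude that $\hat\phi^*$ restricted to $H^1(\hat S;\bR)^+$ has \emph{no eigenvalue of modulus} $K$. That does not follow: it only excludes the eigenvalue $+K$ on the invariant part, not $-K$ or a complex eigenvalue $Ke^{i\theta}$, any of which would still force the spectral radius on the base to equal $K$. This is not a vacuous worry, since $-K$ genuinely occurs when an orientable foliation has its orientation reversed, so the exclusion must use more than linear algebra. Your coframe computation can be upgraded to handle it --- for a modulus-$K$ eigenclass written as $a\,dx+b\,dy$ one gets $b=0$ and $a\circ\hat\phi=e^{i\theta}a$ with $|a|$ constant, and one must then invoke the absence of nontrivial Koopman eigenfunctions for $\hat\phi$ with respect to the flat area (mixing, not mere ergodicity) --- but you neither state nor use this. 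Second, the analytic core you yourself flag is not carried out: you never produce a closed representative of an eigenclass whose coframe coefficients are in $L^2$ and satisfy the eigen-equation pointwise; the device $L^{-n}(\hat\phi^n)^*\alpha$ is plausible (the $dy$-component dies, the $dx$-component stays bounded in $L^2$), but extracting a weak limit that still represents the class and is genuinely $\hat\phi$-equivariant needs an actual argument (e.g.\ Ces\`aro averaging), and the coefficients degenerate at the zeros of $\hat\omega$, so integrability must be checked there. If you prefer not to do this analysis, the paper's shortcut is available to you: cite the known simplicity and strict spectral domination of the dilatation for orientable invariant foliations on the orienting cover, after which your transfer/anti-invariance observation closes the proof exactly as in the paper.
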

\begin{proof}
Suppose that the foliations become orientable on a finite cover.  Then, there is a canonical real homology class associated to $F^u$ and $F^s$ called their Ruelle-Sullivan classes (see \cite{RS} for definitions and properties).  These classes will be eigenvectors for the action of $\psi$ on the real homology with eigenvalues $K^{-1}$ and $K$, respectively.  The classes themselves can be realized as a train tracks associated to the foliations.  See \cite{PH} for more details on this construction.

Conversely, we may just suppose that $F=F^s$ remains nonorientable on every finite cover of $\Sigma$.  In this case, $F$ is not locally orientable.  Branching over the singularities of $F$, we may obtain a degree two branched cover of $\Sigma$ to which $F$ lifts to an orientable foliation.

A general property of the stable measured foliation $F$ is that if we consider the image of a simple closed curve $c$ under iteration by $\psi$, the images converge to the stable lamination $\Lambda$ of $\psi$, and the measure is obtained by appropriately rescaling the geometric intersection number.  The measure can be integrated to obtain $F$.

Let $c$ be any simple closed geodesic curve on $\Sigma$ that is transverse to $F$ and let $i_G$ denote geometric intersection number.  General theory says that \[i_G(c,\psi(F))=K\cdot i_G(c,F).\]  On the other hand, let $i_A$ denote algebraic intersection number, and identify the weighted train track $\tau$ associated to $F$ with the Ruelle-Sullivan class of $F$.  Then, \[i_A(c,\psi(\tau))=K\cdot i_A(c,\tau).\]  This implies that if the homology class of $[c]$ is nonzero and has nonzero algebraic intersection with $\tau$, the the projective class of $[\psi^n(c)]$ will converge to that of $\tau$ as $n$ goes to infinity.

Certainly there can be simple closed curves that are homologically nontrivial, have nonzero geometric intersection with $F$ but have trivial algebraic intersection with $F$.  In this case, we cannot deduce convergence in homology to $\tau$ from geometric convergence.  It is known that for an orientable invariant foliation for a pseudo-Anosov map the dilatation appears as a simple eigenvalue of the induced map on homology by the work of Thurston and McMullen, so that we may restrict our attention to homologically nontrivial curves with nonzero algebraic intersection with $\tau$.

Therefore, there is a cone of positive vectors in $H_1(\Sigma,\bR)$ which is preserved and is mapped strictly into itself by the linear map induced by $\psi$.  It follows that the induced matrix on the real homology is Perron-Frobenius, and therefore has a unique largest eigenvalue which is real and positive.

Notice that if $p:\Sigma'\to\Sigma$ is a (possibly branched) cover, then there is a canonical injection $H_1(\Sigma,\bR)\to H_1(\Sigma',\bR)$ given by the transfer homomorphism.  Furthermore, there is a splitting $H_1(\Sigma',\bR)\cong H_1(\Sigma,\bR)\oplus V$, where $V$ is the kernel of the map $p_*$ induced my the cover.  The homology class of $\tau$ lies in $V$, since we may assume $F$ is nonorientable on the base.  Also, since the attracting positive ray in $H_1(\Sigma',\bR)$ determined by $\tau$ is unique, there are no other eigenvectors with eigenvalue $K$.  It follows that the spectral radius of the map on $H_1(\Sigma,\bR)$ induced by $\psi$ is strictly less than $K$.
\end{proof}

As is mentioned in the proof, the invariant foliations for $\psi$ can be encoded in a combinatorial object, namely a measured train track $\tau$.  This is a weighted embedded graph satisfying some local smoothness conditions, a measure compatibility condition at the switches, and a geometric condition on the complementary regions.  The graph $\tau$ will be orientable in a way compatible with the smoothness criteria if and only if the invariant foliations are orientable.  It is generally not possible to pass to a finite cover of $\Sigma$ and obtain an orientable train track, as the obstructions to orientability are local.  However, a double branched cover $\Sigma'$ of $\Sigma$ with branching to degree two over each over one point in each of the (unpunctured) complementary regions furnishes a surface together with an orientable train track and a lift of $\psi$ that preserves $\tau$ and scales the measure by a factor of exactly $K$.

It follows that there is a special subspace of $H_1(\Sigma',\bC)$ that contains the missing data necessary to reconstruct the dilatation of $\psi$ and which is missing from the homology data of a usual Galois unbranched cover.  This data could be called the {\bf branched homological data}, as opposed to the unbranched data which can be seen entirely on the level of the fundamental group.

We follow the paradigm outlined above.  The branching locus is $B$ and we decompose $\Sigma\setminus B$ into $\Sigma\setminus D$ and $D\setminus B$ for some smoothly embedded disk containing the branching locus.  The target group is $\bZ/2\bZ$ which is abelian, so that the image of the homotopy class of $\partial D$ is forced to be trivial.  The branching occurs over an even number of punctures by the condition that the homotopy class of $\partial D$ has trivial image.  If $q$ is such a branch point, $p^{-1}(q)$ consists of one point and its stabilizer is all of $\bZ/2\bZ$.  Therefore as one would expect, the representations in $H_1(\Sigma'\setminus p^{-1}(B),\bC)$ that are killed by filling in the branch points are all trivial.

Now suppose that $\Sigma$ was closed to begin with, so that $|B|=n$ is even and positive.  The $\bZ/2\bZ$-module $H_1(\Sigma'\setminus p^{-1}(B),\bC)$ consists of $2g+n-1$ copies of the regular representation, together with one copy of the trivial representation.  Filling in the $n$ branch points kills $n-1$ copies of the trivial representation.  Therefore, if $\rho_z$ denotes the regular representation $\rho_R$ of a group with the trivial representation $\mathbf{1}$ factored out, the resulting module $H_1(\Sigma'\setminus p^{-1}(B),\bC)$ is \[2g\rho_R+(n-1)\rho_z+\mathbf{1}.\]  Comparing this module to a standard degree two unbranched cover of $\Sigma$, we would obtain $(2g-2)\rho_R+2\mathbf{1}$.

Thus we obtain a precise result about the missing data needed to obtain the dilatation of $\psi$:

\begin{thm}
The branched homological data needed to detect the dilatation of a mapping class $\psi$ is contained in one copy of $\rho_R$, together with $n$ copies of $\rho_z$.
\end{thm}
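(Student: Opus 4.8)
The plan is to obtain the statement as a short computation in the representation ring $R(\bZ/2\bZ)$, comparing the $G$-module $H_1(\Sigma',\bC)$ of the branched double cover with the homology already visible on an unramified double cover, and then to locate the dilatation eigenvector $[\tau]$ inside the difference. First I would record the two module structures assembled in the paragraphs just above: for the cover $\Sigma'\to\Sigma$ branched to degree two over one point in each of the $n$ complementary regions of the train track, the theorem above relating $H_1(\Sigma',\bC)$ to $H_1(\Sigma'\setminus p^{-1}(B),\bC)$ together with the unramified Chevalley--Weil count of $H_1$ of the punctured cover gives
\[
H_1(\Sigma',\bC)\;\cong\;2g\,\rho_R\,\oplus\,(n-1)\,\rho_z\,\oplus\,\mathbf{1},
\]
while a standard degree-two unramified cover of $\Sigma$ has $H_1\cong(2g-2)\,\rho_R\oplus 2\,\mathbf{1}$, and this is exactly the part of the homology that is already detectable from the fundamental group of $\Sigma$ without passing to a branched cover.

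Next I would check that the unramified module embeds, as an abstract $G$-module, into $H_1(\Sigma',\bC)$: expanding via $\rho_R=\mathbf{1}\oplus\rho_z$, the branched module is $(2g+1)\mathbf{1}\oplus(2g+n-1)\rho_z$ and the unramified one is $2g\,\mathbf{1}\oplus(2g-2)\rho_z$, so the multiplicity inequalities $2g\le 2g+1$ and $2g-2\le 2g+n-1$ give the inclusion, and complete reducibility supplies a $G$-invariant complement $W$. Subtracting characters yields $W\cong\mathbf{1}\oplus(n+1)\rho_z$, and regrouping $\mathbf{1}\oplus\rho_z=\rho_R$ gives $W\cong\rho_R\oplus n\,\rho_z$. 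This complement $W$ is precisely what deserves to be called the branched homological data, which settles the numerical content of the statement.

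The remaining step -- and the one I expect to be the real obstacle -- is to verify that $W$ genuinely carries the information needed to recover $K$. Here I would use that the stable foliation $F$ is nonorientable on $\Sigma$ (this is exactly why the branched cover was introduced), so that the Ruelle--Sullivan class $[\tau]$ of the lifted orientable foliation lies in $\ker p_*$ and is reversed by the deck involution; hence $[\tau]$ sits in the $\rho_z$-isotypic part of $H_1(\Sigma',\bC)$. Combining this with the spectral radius theorem proved above -- which says that such a $K$-eigenvector for the lift of $\psi$ appears precisely because the foliations become orientable on $\Sigma'$ and on no unramified cover -- forces $[\tau]$ out of the unramified submodule and into $W$. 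The delicate point is that one must rule out the $K$-eigenvector being visible on \emph{any} unramified cover, not merely on the particular double cover at hand; this is where the ``if and only if'' of the earlier theorem together with the locality of the orientability obstruction for the train track does the work, so the argument is available but needs care. Everything else is bookkeeping in $R(\bZ/2\bZ)$.
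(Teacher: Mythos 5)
Your first two steps are, in substance, the paper's entire proof: the theorem is stated as a corollary of the character bookkeeping carried out in the preceding paragraphs, namely comparing $2g\rho_R\oplus(n-1)\rho_z\oplus\mathbf{1}$ for the closed branched double cover with $(2g-2)\rho_R\oplus 2\mathbf{1}$ for a standard unramified double cover and reading off the difference $\rho_R\oplus n\,\rho_z$ in the representation ring of $\bZ/2\bZ$; so on that part you and the paper agree. Your third step, however, attempts something the paper deliberately does not claim, and as phrased it is not well-posed: the copy of the unramified module you produce by multiplicity inequalities and complete reducibility is an arbitrary abstract choice, and the paper warns immediately after the theorem that the splitting is not canonical --- there is no map $(2g-2)\rho_R+2\mathbf{1}\to 2g\rho_R+(n-1)\rho_z+\mathbf{1}$ compatible with $\psi$ --- so the assertion that $[\tau]$ is ``forced into $W$'' has no invariant meaning for a chosen complement $W$, and no $\psi$-equivariant version of your embedding exists to give it one. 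The canonical statements available are only that $[\tau]$ lies in $\ker p_*$, i.e.\ in the $\rho_z$-isotypic part, and that by the earlier spectral-radius theorem no unramified cover carries a $K$-eigenvector; if you want to say more than the bookkeeping, say it in that form and drop the chosen splitting. Finally, a caution you inherit from the paper rather than introduce: the count of $2g+n-1$ regular summands for the punctured cover (hence a closed-cover module of dimension $4g+n$) is inconsistent with the paper's own general formula, which gives $2g+n-2$ regular summands, and with Riemann--Hurwitz, which gives genus $2g-1+n/2$ and $\dim H_1(\Sigma',\bC)=4g+n-2$; with the corrected count the closed branched module is $2g\,\mathbf{1}\oplus(2g+n-2)\rho_z$ and the difference from the unramified cover is $n\,\rho_z$ with no extra copy of $\rho_R$, so the numerical content of the statement itself deserves a second look rather than being taken at face value.
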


It is important not to be mislead by this theorem, as the splitting of $H_1(\Sigma',\bC)$ is not canonical, in the sense that there is no map \[(2g-2)\rho_R+2\mathbf{1}\to 2g\rho_R+(n-1)\rho_z+\mathbf{1}\] which is compatible with the action of $\psi$.

\section{Representations of finite index subgroups of $\Mod(\Sigma)$ and $\Mod^1(\Sigma)$}\label{s:MCG}
Let $p:\Sigma'\to\Sigma$ be a finite Galois cover with Galois group $G$.  Assume furthermore that it is characteristic so that $\pi_1(\Sigma')$, viewed as a subgroup of $\pi_1(\Sigma)$, is invariant under the automorphisms of $\pi_1(\Sigma)$.  It is standard that we may identify the mapping class group of $\Sigma$, namely $\Mod(\Sigma)$, with a subgroup of index $2$ in $\Out(\pi_1(\Sigma))$, and the mapping class group with one marked point $\Mod^1(\Sigma)$ with a subgroup of index $2$ of $\Aut(\pi_1(\Sigma))$.

There is a map $\Mod(\Sigma)\to Sp_{2g}(\bZ)$ given by looking at the action on the homology of $\Sigma$.  There is generally no map $\Mod(\Sigma)\to Sp_{2g'}(\bZ)$ given by an action on the homology of $\Sigma'$, since there is an ambiguity as to which lift of a particular mapping class is to be chosen, and because the Galois group of the cover acts nontrivially on the homology of $\Sigma'$.  We do get a map $\Mod^1(\Sigma)\to Sp_{2g'}(\bZ)$, however.  We remark that it is known that each mapping class acts nontrivially on the homology of some finite cover of $\Sigma$, and that such a cover can be taken to be nilpotent (see \cite{K}).

Consider the $G$-module $H_1(\Sigma',\bC)$  We may decompose it into irreducible modules.  Let $V$ be a particular irreducible $G$-module and $\psi$ a mapping class.  Then we obtain a new irreducible $G$-module $V_{\psi}$.  As a vector space, $V_{\psi}$ is just $\psi(V)$.  The $G$-action is given by $\psi(g\cdot V)=\psi(g)\cdot V_{\psi}$.  Thus, the mapping class group permutes the isotypic representations of $H_1(\Sigma',\bC)$.  Since there are only finitely many of these, we find a finite index subgroup $S_{\Sigma'}$ of $\Mod^1(\Sigma)$ that fixes the isotypic components of the $G$-module $H_1(\Sigma',\bC)$.  There is a further finite index subgroup $S_{\Sigma',G}$ that commutes with the $G$-action on the homology, which maps to $Sp_{2g'}^G(\bZ)$, the group of symplectic matrices commuting with the $G$-action.

In general, it seems difficult to give a precise characterization of the image of this map.  In the case of an abelian cover, the $G$-action on the irreducible representations is given by scalar multiplication.  It follows that a finite index subgroup of $\Mod(\Sigma)$ acts on the projectivized isotypic pieces of $H_1(\Sigma',\bC)$.  This representation has been studied by Looijenga in \cite{L}.  He shows that when the genus is at least $3$, the image of this map has finite index.  This is in striking contrast to the case of braid groups for instance, where the image may not be arithmetic (see \cite{Mc}).
\section{Cyclic Covers of Hyperelliptic Curves Branched at Two Points, McMullen's Approach, and the ``Regular'' Part of a Ramified Representation}\label{s:cyclic}
In \cite{Mc}, McMullen analyzes cohomology representations for cyclic, ramified covers of $\mathbb{P}^1$ by using the large automorphism group of $\mathbb{P}^1$.  He writes down a model for a branched cover in which the branch points of the base are acted upon simply transitively by a cyclic subgroup of $PGL_2(\mathbb{C})$, then uses this information to write down a cyclic cover whose equations he can write down explicitly.  With this information, he can write down a basis for $H^0(K)$ of the cover with a very explicit Galois action.

Since a genus $g\geq 2$ surface (regardless of the number of punctures) has an automorphism group of cardinality $\leq 84(g-1)$, we cannot expect to find such a general method in the high-genus case for describing such covers.  There is, though, one class of curves to which we can apply this method: hyperelliptic curves.

In this section, we will often write down equations for projective varieties using affine coordinates. A hyperelliptic curve is a curve which admits a degree $2$ map to $\mathbb{P}^1$.  We collect the following standard, equivalent formulations of hyperellipticity (see \cite{ACGH}):
\begin{thm}
Let $X$ be a smooth curve of genus $g \geq 2$.  Then $X$ is hyperelliptic if and only if the following equivalent criteria hold:
\begin{enumerate}
\item There exists a map $p: X\rightarrow \mathbb{P}^1$ of degree $2$.
\item The canonical map $\kappa: X\rightarrow \mathbb{P}^{g-1}$ is generically $2$-to-$1$.
\item The canonical map $\kappa: X\rightarrow \mathbb{P}^{g-1}$ is not injective.
\item $X$ is isomorphic to a plane curve by the (affine) equation $y^2 = f(x)$ where $f(x)$ is an equation of degree $2g+2$.
\end{enumerate}
The Weierstrass points of $X$ are then $(x, 0)$ where $f(x) = 0$, and the weight sequence at those points is $0, 2, \dots, 2g-2$.  Furthermore, we can write down a basis for differentials on $X$ in terms of its expression as a plane curve:
\[
H^0(K_X) = \left\{\frac{dx}{y}, x\frac{dx}{y}, \dots, x^{g-1}\frac{dx}{y}\right\}.
\]
\end{thm}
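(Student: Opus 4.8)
The plan is to take condition $(1)$ --- the existence of a degree-$2$ morphism $p\colon X\to\mathbb{P}^1$ --- as the definition of hyperellipticity, to prove the cycle $(1)\Rightarrow(2)\Rightarrow(3)\Rightarrow(1)$ together with $(1)\Leftrightarrow(4)$, and then to read the assertions about Weierstrass points and the basis of $H^0(K_X)$ off the plane model in $(4)$. Throughout, the only nontrivial inputs are Riemann--Roch, Clifford's theorem, and Riemann--Hurwitz, all of which may be cited from \cite{ACGH}.

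For $(1)\Rightarrow(2)$: let $D$ be a divisor with $\mathcal{O}_X(D)=p^{*}\mathcal{O}_{\mathbb{P}^1}(1)$, so that $|D|$ is a base-point-free pencil of degree $2$. Pulling back global sections gives $h^{0}(kD)\geq k+1$ for all $k\geq 0$. On the other hand, for $0\leq k\leq g-1$ the divisor $kD$ cannot be nonspecial, since Riemann--Roch would then force $h^{0}(kD)=2k+1-g<k+1$; hence Clifford's theorem applies and gives $h^{0}(kD)\leq k+1$, so $h^{0}(kD)=k+1$. Taking $k=g-1$ produces a divisor of degree $2g-2$ with $h^{0}=g$, which Riemann--Roch forces to be linearly equivalent to $K_X$, and the pullback map identifies $H^{0}(\mathbb{P}^1,\mathcal{O}(g-1))$ with $H^{0}(K_X)$. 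Thus $\kappa$ is the composition of $p$ with the degree-$(g-1)$ rational normal curve embedding $\mathbb{P}^1\hookrightarrow\mathbb{P}^{g-1}$, so it is generically $2$-to-$1$. The implication $(2)\Rightarrow(3)$ is immediate. For $(3)\Rightarrow(1)$: pick distinct $P,Q\in X$ with $\kappa(P)=\kappa(Q)$; then $|K_X|$ fails to separate $P$ and $Q$, so $h^{0}(K_X-P-Q)=h^{0}(K_X)-1=g-1$, and Riemann--Roch gives $h^{0}(P+Q)=2$, i.e.\ a base-point-free degree-$2$ pencil, which furnishes the desired map.

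For $(1)\Leftrightarrow(4)$: the morphism $p$ makes $\bC(X)$ a degree-$2$ extension of $\bC(x)$, so $\bC(X)=\bC(x)[y]$ with $y^{2}=f(x)$ for a squarefree polynomial $f$; Riemann--Hurwitz forces exactly $2g+2$ branch points, and a Möbius change of the coordinate $x$ moves any branch point lying over $\infty$ to a finite point, so we may arrange $\deg f=2g+2$. The reverse implication is just projection onto the $x$-coordinate. On this model the ramification points of $p$ are precisely the $(a,0)$ with $f(a)=0$; there $y$ is a uniformizer and $x-a$ vanishes to order $2$, while $dx/y$ is nonvanishing and holomorphic. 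A direct local check --- at finite points with $y\neq 0$, at the points $(a,0)$, and at the two points over $\infty$ using $u=1/x$, $w=yu^{g+1}$, $w^{2}=u^{2g+2}f(1/u)$ --- shows that the $g$ forms $x^{j}\,dx/y$ with $0\leq j\leq g-1$ are holomorphic; they are visibly linearly independent, hence a basis of $H^{0}(K_X)$. Re-expressing this basis as $(x-a)^{j}\,dx/y$ then exhibits the orders of vanishing $0,2,\dots,2g-2$ at a Weierstrass point, which is the claimed weight sequence.

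I expect the main obstacle to be the bookkeeping in $(1)\Rightarrow(2)$: combining Clifford's theorem with Riemann--Roch to pin down $h^{0}(kD)$ for every $k\leq g-1$, and then concluding that $(g-1)D\sim K_X$ with $p^{*}$ an isomorphism on global sections. Once that factorization of the canonical map through a rational normal curve is established, the remaining equivalences are one-line consequences of Riemann--Roch, and the statements about the differentials and the Weierstrass weight sequence reduce to purely local computations on the affine model $y^{2}=f(x)$.
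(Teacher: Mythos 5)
The paper gives no argument for this theorem at all: it is stated as a list of standard equivalent characterizations of hyperellipticity and simply cited to \cite{ACGH}, so your proposal is supplying the proof the paper delegates, and it is the standard one (Clifford's theorem plus Riemann--Roch to show $h^0(kD)=k+1$ and $(g-1)D\sim K_X$, so that $\kappa$ factors as $p$ followed by the rational normal curve of degree $g-1$; Riemann--Roch again for $(3)\Rightarrow(1)$; the function-field and Riemann--Hurwitz argument for the plane model; and local computations at the points $(a,0)$ and over $\infty$ for the basis $x^j\,dx/y$). All of these steps are correct, including the small points you leave implicit (the pullback $H^0(\mathbb{P}^1,\mathcal{O}(g-1))\to H^0(K_X)$ is an isomorphism because it is injective between spaces of dimension $g$, and in $(3)\Rightarrow(1)$ the pencil $|P+Q|$ is base-point-free since $g\geq 2$). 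The one genuine loose end concerns the sentence ``the Weierstrass points of $X$ are then $(x,0)$ where $f(x)=0$'': your local computation shows that each ramification point of $p$ has canonical vanishing sequence $0,2,\dots,2g-2$, hence is a Weierstrass point of weight $g(g-1)/2$, but it does not rule out additional Weierstrass points. To finish, add the count that the total weight of all Weierstrass points is $g^3-g$, while the $2g+2$ ramification points already contribute $(2g+2)\cdot g(g-1)/2=g^3-g$, so they exhaust the Weierstrass points. With that one line added, your proof is complete and matches the reference the paper relies on.
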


Let now $X$ be a hyperelliptic curve given by the equation
\[
y^2 = \sum_{i=0}^{2g+2} a_i x^i,
\]
where $a_0 \neq 0$ and $n = 2g + 2$.  The genus of $X$ is $g$.  Then if we define $X'$ by the equation
\[
y^2 = \sum_{i=0}^{2g+2} a_i x'^{mi}
\]
then $X'$ is a cyclic cover of $X$ with group $G = \mathbb{Z}/m\mathbb{Z}$ fully ramified at $(0, \pm \sqrt{a_0})$ with covering map given by
\[
(y,x') \mapsto (y,x'^m)
\]
and isomorphism $G \rightarrow \mu_m$
\[
g \mapsto (x' \mapsto \zeta x')
\]
where $g$ is a fixed generator of $G$ and $\zeta$ is a primitive root of unity.

We then have that a basis of differential forms for $X'$ is given by
\[
\left\{\frac{dx'}{y}, x'\frac{dx'}{y}, x'^2 \frac{dx'}{y}, \dots, x'^{mg-1}\frac{dx'}{y}\right\}.
\]
The $1$-dimensional vector space spanned by each of these basis elements forms a one-dimensional representation of $G$:
\[
g(x'^\ell \frac{dx'}{y}) = \zeta^{\ell+1}x'^\ell\frac{dx'}{y}.
\]
This gives us an explicit realization of the representation
\[
H^0(K, X') = \rho_R^g
\]
as each character of $G$ appears with multiplicity $g$ (as representations occur with multiplicity one in the regular representations of abelian groups).

When we analyze the Chevalley-Weil representations by the fixed point perspective, there is a contribution from the branch points, and a contribution which comes from the ``unramified'' part of the cover.  In that case we can ``localize away'' this contribution to the branch points by analyzing $H^0(K-B)$ and $H^0(K)/H^0(K-B)$ separately, where
\[
B = (n-1)(0, \sqrt{a_0}) + (n-1)(0,-\sqrt{a_0})
\]
is the ramification divisor on $X'$.  We can write down $H^0(K-B)$; this is the vector space spanned by
\[
\left\{x'^{m-1}\frac{dx'}{y}, x'^m\frac{dx'}{y}, x'^{m+1} \frac{dx'}{y}, \dots, x'^{mg-1}\frac{dx'}{y}\right\}.
\]
This tells us that
\[
h^0(K-B) = m(g-1) + 1.
\]
The induced map
\[
X'\rightarrow \mathbb{P}H^0(K-B)\dual
\]
then factors through the hyperelliptic quotient, is basepoint free, and thus has image a rational normal curve of degree $m(g-1)$; furthermore, 
\[
\deg K - B = 2m(g-1).
\]
Let $\omega$ be a form on $X$ and let $p^*\omega$ be its pullback; this fixes a trivial representation of $G$.  Then in the notation of the section on the unramified Chevalley-Weil theory, $H_{p^*\omega} \cap X'$ is a disjoint union of $g-1$ $G$-torsors and we may realize by exactly the same method
\[
H^0(K-B) \simeq \rho_R^{g-1} \oplus \mathbf{1}.
\]
Indeed, the same argument, combined with Clifford's theorem and standard results on rational normal curves \cite{ACGH}, implies that 
\begin{thm}
Let $p: \Sigma'\rightarrow \Sigma$ be a finite, Galois cover of Riemann surfaces with Galois group $G$, let $B$ be the ramification divisor of $p$, and let $J$ be a complex structure on $\Sigma$ such that on $\Sigma'$
\[
h^0(K) - h^0(K-B) = \frac{1}{2}\deg B.
\]
Then
\[
h^0(K-B) \simeq \rho_R^{(g-1)} \oplus \mathbf{1},
\]
and the isomorphism is geometrically realizable.
\end{thm}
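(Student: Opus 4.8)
The plan is to feed the numerical hypothesis into Clifford's theorem so as to pin down the geometry of $\Sigma'$, and then to re-run, essentially verbatim, the argument of the unramified geometric Chevalley--Weil theorem of Section~\ref{s:geometriccw}.

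First I would rewrite everything in terms of a pulled-back canonical bundle. Since $B$ is the ramification divisor, $K_{\Sigma'}\cong p^*K_\Sigma\otimes\mathcal O_{\Sigma'}(B)$, so $L:=K_{\Sigma'}-B\cong p^*K_\Sigma$; thus $H^0(K_{\Sigma'}-B)=H^0(\Sigma',p^*K_\Sigma)$ is literally the space of pulled-back abelian differentials, with its evident $G$-action, and $\deg L=n(2g-2)=2n(g-1)$ where $n=|G|$. By Riemann--Hurwitz $2g'-2=n(2g-2)+\deg B$ (with $g'$ the genus of $\Sigma'$), so the hypothesis $h^0(K)-h^0(K-B)=\tfrac12\deg B$ is exactly the statement $h^0(\Sigma',p^*K_\Sigma)=g'-\tfrac12\deg B=n(g-1)+1$. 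The degenerate cases are easy: for $g=0$ the left side is $0$ and the hypothesis is impossible; for $g=1$ we have $p^*K_\Sigma=\mathcal O_{\Sigma'}$ and the claim reduces to $H^0(\mathcal O_{\Sigma'})=\mathbf 1$; and if $B=0$ the assertion is precisely the unramified geometric Chevalley--Weil theorem already proved. So assume $g\ge 2$ and $B\ne 0$, whence $g'\ge 3$. Now $L=p^*K_\Sigma$ is effective and special, since $h^1(L)=h^0(K_{\Sigma'}-L)=h^0(\mathcal O_{\Sigma'}(B))\ge 1$, and by the above it attains equality $h^0(L)=\tfrac12\deg L+1$ in Clifford's inequality. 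As $\deg L>0$ and $L\not\cong K_{\Sigma'}$ (because $B\ne 0$), the classification of the equality case forces $\Sigma'$ to be hyperelliptic and $L\cong r\cdot g^1_2$ with $r=\tfrac12\deg L=n(g-1)$, where $g^1_2$ is the unique (as $g'\ge 2$) hyperelliptic pencil. Let $\psi\colon\Sigma'\to\bP^1$ be the hyperelliptic map; then $L\cong\psi^*\mathcal O_{\bP^1}(n(g-1))$, and since $\psi_*\mathcal O_{\Sigma'}=\mathcal O_{\bP^1}\oplus\mathcal O_{\bP^1}(-g'-1)$ with $n(g-1)<g'+1$, the projection formula gives $H^0(\Sigma',L)=H^0(\bP^1,\mathcal O_{\bP^1}(n(g-1)))$, and the complete-linear-system map of $L$ is $\psi$ followed by the embedding of $\bP^1$ as the rational normal curve of degree $n(g-1)$. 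Because $g'\ge 2$ the pencil $g^1_2$, hence the hyperelliptic involution $\iota$, is canonical, so $G$ normalizes $\langle\iota\rangle$, acts on $\bP^1$ through $\PGL_2(\bC)$ making $\psi$ equivariant, and the identification $H^0(K_{\Sigma'}-B)\cong H^0(\bP^1,\mathcal O_{\bP^1}(n(g-1)))$ is $G$-linear. Crucially $\iota\notin G$: otherwise $\Sigma'\to\Sigma'/\langle\iota\rangle=\bP^1$ would be an intermediate cover of $\Sigma$, impossible since $\Sigma$ has genus $\ge 2$.

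With the geometry in place I would re-run the unramified argument. Choose a generic canonical decomposition $K_\Sigma=K_1+K_2$ as in the Lemma, with $\deg K_i=g-1$ and $h^0(K_i)=1$, and — using that a general canonical divisor on $\Sigma$, and a general sub-divisor of it, avoid any fixed finite set — with the supports of $K_1,K_2$ disjoint from the branch locus of $p$; let $\omega_1,\omega_2$ cut out $K_1,K_2$. Pulling the sequence $0\to\mathcal O(K_1)\xrightarrow{\cdot\omega_2}\mathcal O(K_\Sigma)\to i_*i^*\mathcal O(K_2)\to 0$ back along the flat map $p$, and using that $p^{-1}(K_2)$ is a reduced $G$-stable divisor which is a disjoint union of $g-1$ $G$-torsors, produces a $G$-equivariant exact sequence
\[
0\to H^0(\Sigma',p^*K_1)\to H^0(\Sigma',p^*K_\Sigma)\xrightarrow{\ r\ }\Funct\bigl(p^{-1}(K_2),\bC\bigr)\to\cdots,
\]
with $r$ restriction (after the standard trivializations of the line bundle along the fibers). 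Its kernel is $H^0\bigl(\Sigma',p^*K_\Sigma(-p^{-1}(K_2))\bigr)=H^0(\Sigma',p^*K_1)$, which contains the trivial $G$-line $\bC\cdot p^*(\omega_1\omega_2)$.

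The remaining point — and the main obstacle — is to show $h^0(\Sigma',p^*K_1)=1$. Here I would exploit the hyperelliptic structure forced out by Clifford: $\mathcal O_{\Sigma'}(p^*K_1)\cong\psi^*\mathcal O_{\bP^1}(n(g-1))\otimes\mathcal O_{\Sigma'}\bigl(-\iota(p^{-1}(K_1))\bigr)$, since $p^{-1}(K_1)+\iota(p^{-1}(K_1))$ is $\iota$-invariant of degree $2n(g-1)$ and hence equals $\psi^*$ of a divisor of degree $n(g-1)$ on $\bP^1$; a section of $\psi^*\mathcal O_{\bP^1}(n(g-1))$ is the pullback of a polynomial of degree $\le n(g-1)$ on $\bP^1$ (the same projection-formula computation), and such a polynomial vanishes along $\iota(p^{-1}(K_1))$ if and only if it vanishes at $\psi\bigl(p^{-1}(K_1)\bigr)$, which for generic $K_1$ is a set of exactly $n(g-1)$ \emph{distinct} points — and here is precisely where $\iota\notin G$ enters, since a collision $\psi(x)=\psi(gx)$ within a $G$-orbit would force $\iota\in G$. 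Hence $h^0(\Sigma',p^*K_1)=(n(g-1)+1)-n(g-1)=1$, the kernel of $r$ is $\bC\cdot p^*\omega$ with $\omega=\omega_1\omega_2$ a canonical form on $\Sigma$, and $r$ is surjective by the dimension count. Splitting the resulting short exact sequence of $G$-modules (Maschke) gives
\[
H^0(K_{\Sigma'}-B)\ \cong\ \Funct\bigl(p^{-1}(K_2),\bC\bigr)\oplus\bC\,p^*\omega\ \cong\ \rho_R^{\,g-1}\oplus\mathbf 1,
\]
and, exactly as in the unramified case, the choices of $\omega$, of the points of $K_2$, and of $G$-equivariant identifications $p^{-1}(q_i)\xrightarrow{\sim}X$ render the isomorphism explicit — this is the asserted geometric realizability. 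Everything in Steps two and three is formal once the one-dimensionality of $H^0(\Sigma',p^*K_1)$ is established; that, i.e.\ controlling how the canonical geometry of $\Sigma$ sits inside the hyperelliptic structure on $\Sigma'$, is the step requiring care.
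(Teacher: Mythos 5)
Your proposal follows the route the paper itself intends (the paper only sketches it): the Clifford-equality hypothesis forces $\Sigma'$ to be hyperelliptic with $K-B=p^*K_\Sigma$ a multiple of the $g^1_2$, so $|K-B|$ factors through the hyperelliptic map $\psi$ onto a rational normal curve of degree $n(g-1)$, and one then reruns the unramified geometric Chevalley--Weil argument on a generic decomposition $K_\Sigma=K_1+K_2$, reducing everything to $h^0(\Sigma',p^*K_1)=1$, i.e.\ to the $n(g-1)$ points of $p^{-1}(K_1)$ having distinct $\psi$-images. All of this, including the $G$-torsor structure of $p^{-1}(K_2)$ and the identification of the trivial summand with $\bC\,p^*\omega$, matches the paper's intended argument.

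The one step that is not yet sound is precisely the genericity claim you flag as delicate. First, the inference ``a collision $\psi(x)=\psi(gx)$ within a $G$-orbit would force $\iota\in G$'' is not valid: such a collision with $g\neq e$ only says $gx=\iota x$, i.e.\ that $x$ is a fixed point of $\iota g$. What $\iota\notin G$ actually buys you is that every $\iota g$ ($g\in G$) is a \emph{nontrivial} automorphism of $\Sigma'$, hence has finitely many fixed points, so a generic $K_1$ avoids their images under $p$ --- same conclusion, different (correct) reason. Second, you only treat collisions inside a single fiber; points lying over two distinct $q,q'\in K_1$ can also collide, and this happens exactly when $q'=\bar\iota(q)$, where $\bar\iota$ is the automorphism of $\Sigma$ induced by $\iota$ (it descends because $\iota$ is central in $\Aut(\Sigma')$, hence normalizes $G$), and $\bar\iota\neq\mathrm{id}$ again because $\iota\notin G$. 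For a generic canonical divisor no pair $\{q,\bar\iota(q)\}$ occurs, with one exception: if $\Sigma$ is itself hyperelliptic and $\bar\iota$ is its hyperelliptic involution, then \emph{every} canonical divisor of $\Sigma$ is a sum of $g-1$ conjugate pairs, and genericity alone cannot help; there you must use that the spanning condition defining $K_1$ forces it to contain exactly one point from each pair (two conjugate points have the same canonical image, so a $K_1$ containing a pair could not projectively generate $H_\omega$), which again yields injectivity of $\psi$ on $p^{-1}(K_1)$. With these repairs the dimension count $h^0(\Sigma',p^*K_1)=1$ goes through and your proof is complete, in agreement with the paper's approach.
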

We thus ask the question
\begin{quest}
For some  $J$ on $\Sigma$, is it true that on $\Sigma'$ with complex structure given by $p^*J$ \[h^0(K) - h^0(K-B) = \frac{1}{2}\deg B.\]
\end{quest}
If this were true, then by Clifford's theorem (see \cite{ACGH}), it would imply a positive answer to the following weaker question:
\begin{quest}\label{quest:hyperell}
Is there some $J$ on $\Sigma$ such that $\Sigma'$ with complex structure given by $p^*J$ is hyperelliptic?
\end{quest}
The answer to this question is negative.  We now describe the process for constructing counterexamples.
\begin{lemma}
Let $X$ be a hyperelliptic curve, and $\varphi: X\rightarrow \mathbb{P}^1$ its degree $2$ map to $\mathbb{P}^1$, $G$ the Galois group of this cover, consisting of the identity element and the hyperelliptic involution, and $B\subset \mathbb{P}^1$ the branch points of this cover.  Then we may write
\[
1 \rightarrow G \rightarrow \Aut(X)\rightarrow P_B\rightarrow 1
\]
where $P_B$ is the group of all automorphisms of $\mathbb{P}^1$ which permute $B$ and $G$ is central.
\end{lemma}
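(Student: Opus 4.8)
The plan is to construct the homomorphism $\Phi\colon\Aut(X)\to\Aut(\bP^1)=\PGL_2(\bC)$ recording how an automorphism of $X$ acts on the fibers of $\varphi$, and then to identify $\ker\Phi$ with $G$ and $\operatorname{im}\Phi$ with $P_B$; centrality of $G$ will fall out of the first step. The one geometric input I assume is the hyperelliptic theorem quoted above, in the form: for $g\geq 2$ the hyperelliptic pencil on $X$ is unique, equivalently the hyperelliptic involution $\iota$ (the generator of $G$) is the unique involution of $X$ whose quotient has genus $0$. Some hypothesis of this kind is genuinely needed, since for $g=1$ the analogous sequence fails: translations of an elliptic curve do not commute with $x\mapsto -x$.

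First I would prove $G$ is central. For $\sigma\in\Aut(X)$ the conjugate $\sigma\iota\sigma^{-1}$ is again an involution, and $X/\langle\sigma\iota\sigma^{-1}\rangle\cong X/\langle\iota\rangle$ has genus $0$; by uniqueness $\sigma\iota\sigma^{-1}=\iota$, so $G=\{\id,\iota\}$ lies in the center of $\Aut(X)$, and in particular is normal. Because $\iota$ is central, every $\sigma$ carries $\iota$-orbits to $\iota$-orbits, i.e. fibers of $\varphi$ to fibers of $\varphi$; hence there is a unique $\bar\sigma\in\Aut(\bP^1)$ with $\varphi\circ\sigma=\bar\sigma\circ\varphi$, and $\sigma\mapsto\bar\sigma$ is the sought homomorphism $\Phi$. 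Since $B=\varphi(\operatorname{Fix}(\iota))$ and $\sigma$ permutes $\operatorname{Fix}(\iota)$ (again by centrality), $\bar\sigma$ permutes $B$, so $\Phi$ lands in $P_B$. If $\Phi(\sigma)=\id$ then $\sigma$ preserves every fiber of $\varphi$; over the connected unramified locus the sets on which $\sigma$ fixes both points of a fiber, respectively swaps them, are closed, disjoint, and cover it, so one of them is everything, forcing $\sigma=\id$ or $\sigma=\iota$. Thus $\ker\Phi=G$.

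The remaining and only non-formal point is surjectivity of $\Phi$ onto $P_B$. I would do this by hand: writing $X$ as $y^2=f(x)$ with $f$ of degree $2g+2$ and simple roots the finite set $B$, and taking $h(x)=(ax+b)/(cx+d)\in P_B$, the rational function $(cx+d)^{2g+2}f(h(x))$ is a polynomial with the same simple zeros $h^{-1}(B)=B$ as $f$, hence equals $\lambda f(x)$ for some $\lambda\in\bC^{*}$; fixing $\sqrt{\lambda}$, the assignment $\tilde h(x,y)=\bigl(h(x),\ \sqrt{\lambda}\,y/(cx+d)^{g+1}\bigr)$ extends to an automorphism of $X$ with $\Phi(\tilde h)=h$. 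Invariantly: for $g\geq 2$ the connected double cover of $\bP^1$ branched exactly over $B$ is unique, since it corresponds to the unique square root $\mathcal{O}_{\bP^1}(g+1)$ of $\mathcal{O}_{\bP^1}(B)$ in $\operatorname{Pic}\bP^1\cong\bZ$; so the fibre product $X\times_{\bP^1,h}\bP^1$, which is a double cover of $\bP^1$ branched over $h^{-1}(B)=B$, is isomorphic to $X$ over $\bP^1$, and composing this isomorphism with the first projection gives the lift $\tilde h$. Assembling the pieces yields the exact sequence $1\to G\to\Aut(X)\to P_B\to 1$ with $G$ central. I expect the bookkeeping in the surjectivity step — checking regularity of $\tilde h$ at the points over $\infty$, or pinning down the fibre-product isomorphism — to be the only real work; everything else is formal once the hyperelliptic pencil is known to be unique.
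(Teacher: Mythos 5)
Your proof is correct, and it follows the same overall skeleton as the paper's: build a homomorphism $\Aut(X)\to P_B$, identify its kernel with $G$, and prove surjectivity by lifting M\"obius transformations. The differences are in the inputs at each step. To descend automorphisms to $\bP^1$ the paper uses that automorphisms permute the Weierstrass points and act on the canonical image (which for a hyperelliptic curve is a rational normal curve), whereas you first prove centrality of $\iota$ from the uniqueness of the hyperelliptic involution for $g\geq 2$ and then send fibers to fibers; a nice side effect is that your argument explicitly establishes the centrality asserted in the statement, which the paper's proof never addresses (it does follow for free there, since a normal subgroup of order two is automatically central, but the paper does not even say this). For surjectivity the paper passes to function fields ($\varphi$ and $\varphi\circ\alpha$ give splitting fields of the same quadratic polynomial, hence are noncanonically isomorphic), which is essentially the same as your invariant fiber-product argument using the uniqueness of the square root $\mathcal{O}_{\bP^1}(g+1)$ of $\mathcal{O}_{\bP^1}(B)$; your explicit substitution $(cx+d)^{2g+2}f(h(x))=\lambda f(x)$ is a concrete version of the same fact, and your check that $h(\infty)\notin B$ (so the degree does not drop) is the only delicate point and is handled correctly. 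Your kernel argument (fixing versus swapping the two sheets is a closed, disjoint, covering dichotomy over the connected unramified locus) is more elementary than the paper's appeal to the canonical map, and your remark that $g\geq 2$ is genuinely needed (translations on an elliptic curve break centrality) is a useful clarification of a hypothesis the paper leaves implicit in its context.
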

\begin{proof}
We have a map $\Aut(X)\rightarrow P_B$ by restricting to the action on branch points; here, we use the fact that an automorphism of a curve must always permute the Weierstrass points, and the Weierstrass points of a hyperelliptic curve are the branch points of $\varphi$.  Let now $\alpha \in P_B$.  Then we have the diagram
\begin{center}
\mbox{\xymatrix{
X\ar[d]^{\varphi} \ar@{-->}[r]^{\tilde{\alpha}} & X\ar[d]^{\varphi} \\
\mathbb{P}^1 \ar[r]^{\alpha} & \mathbb{P}^1.
}}
\end{center}
where the dotted arrow $\tilde{\alpha}$ exists because, once we pass to function fields, $\varphi$ and $\varphi\circ\alpha$ give splitting fields of the same polynomial, and so are noncanonically isomorphic; this lifts $P_B$ to $\Aut(X)$.  To show that $G$ is the entire kernel of the projection to $P_B$, we note that any element of $\Aut(X)$ induces an automorphism of $X/G$ because automorphisms act on the image of the canonical map.  If an element of $\Aut(X)$ fixes $2g+2$ points in the image of the canonical map, it must act by the identity on the image $\mathbb{P}^1$ and hence must be in $G$.
\end{proof}
This lemma limits the possible automorphism groups of a hyperelliptic curve, so to answer Question \ref{quest:hyperell} in the negative, we need to find a covering of curves whose Galois group cannot be written as a subgroup of $\PGL_2(\bC)$, possibly centrally extended by $\bZ/2$.  We may use the fact that $\GL_2(\bC)$ acts faithfully on a vector space to realize that we cannot have a finite abelian subgroup of arbitrarily high rank (for instance, six will do).
\begin{thm}
Let $\pi: \Sigma'\rightarrow \Sigma$ be a cover (branched or not) of Riemann surfaces with Galois group finite and abelian of rank $\geq 6$.  Then there is no complex structure on $\Sigma$ such that the pullback to $\Sigma'$ is hyperelliptic.  In particular, we cannot localize the contributions to the Chevalley-Weil formula at the branch points for such covers.
\end{thm}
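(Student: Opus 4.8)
The plan is to convert the hypothesis ``$\Sigma'$ is hyperelliptic'' into a structural statement about a group containing $G$, and then contradict it with the fact that finite abelian subgroups of $\PGL_2(\bC)$ have bounded rank. Suppose, toward a contradiction, that $J$ is a complex structure on $\Sigma$ for which $\Sigma'$, equipped with $\pi^*J$, is hyperelliptic. Since the genus of $\Sigma'$ is a topological invariant and hyperelliptic curves have genus $\geq 2$ by convention, the assertion is vacuous when $g(\Sigma')\leq 1$, so I may assume $g(\Sigma')\geq 2$; in particular $\Aut(\Sigma')$ is finite. (If one prefers, a short Riemann--Hurwitz count using $|G|\geq 2^6$ shows directly that $g(\Sigma')\geq 2$ in every case not already ruled out by the rank bound below, since an abelian cover of a surface of genus $\leq 1$ branched over at most four points has Galois group of rank $\leq 3$.)

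The first substantive step is to place $G$ inside $\Aut(\Sigma')$. For a deck transformation $\gamma\in G$ we have $\pi\circ\gamma=\pi$, hence $\gamma^*(\pi^*J)=(\pi\gamma)^*J=\pi^*J$, so $\gamma$ is biholomorphic for $\pi^*J$; since the deck group acts effectively, $G\hookrightarrow\Aut(\Sigma')$. Now I invoke the lemma above on automorphism groups of hyperelliptic curves, which gives a central extension
\[
1\longrightarrow \bZ/2\bZ\longrightarrow \Aut(\Sigma')\longrightarrow P_B\longrightarrow 1,
\]
with $P_B$ a finite subgroup of $\PGL_2(\bC)$.

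Next I would push $G$ through this extension. Let $\overline{G}$ be the image of $G$ in $P_B\subseteq\PGL_2(\bC)$; it is a finite abelian subgroup of $\PGL_2(\bC)$, hence, by the classical classification of finite subgroups of $\PGL_2(\bC)$ (whose abelian members are exactly the cyclic groups and the Klein four-group; equivalently, a finite abelian subgroup lies in a maximal torus $\bC^*$ or in a Klein four-group), it has rank at most $2$. The kernel of $G\to\overline{G}$ is contained in the central $\bZ/2\bZ$, so has rank at most $1$. Since the minimal number of generators of an abelian group is subadditive in a short exact sequence, $G$ has rank at most $3$, contradicting the hypothesis that $G$ has rank $\geq 6$ (indeed rank $\geq 4$ already yields the contradiction). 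This proves the first assertion. The ``in particular'' clause follows by combining it with the implication recorded just before Question \ref{quest:hyperell}: were the identity $h^0(K)-h^0(K-B)=\tfrac12\deg B$ to hold for some complex structure on $\Sigma$, Clifford's theorem would force $\Sigma'$ to be hyperelliptic for that structure, which we have just excluded.

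The conceptual heart of the argument is the already-established lemma exhibiting $\Aut(\Sigma')$ as a central $\bZ/2\bZ$-extension of a subgroup of $\PGL_2(\bC)$; granting that, the only further input is the standard rank bound for finite abelian subgroups of $\PGL_2(\bC)$, and everything else is routine bookkeeping. I therefore do not expect a real obstacle; the only points needing minor care are the effectivity/holomorphy check embedding $G$ into $\Aut(\Sigma')$ and the subadditivity of rank across the non-split central extension.
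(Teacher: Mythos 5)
Your proposal is correct and follows essentially the same route as the paper: embed the deck group $G$ in $\Aut(\Sigma')$, invoke the preceding lemma exhibiting $\Aut(\Sigma')$ as a central $\bZ/2\bZ$-extension of a subgroup of $\PGL_2(\bC)$, and contradict the rank hypothesis via the bounded rank of finite abelian subgroups of $\PGL_2(\bC)$ (the paper phrases this through the faithful action of $\GL_2(\bC)$ on $\bC^2$). Your write-up merely supplies details the paper leaves implicit, such as the holomorphy of deck transformations for $\pi^*J$ and the subadditivity of rank across the extension, and correctly notes that rank $\geq 4$ already suffices.
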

\section{Acknowledgements}
The authors would like to thank the numerous people whose insights and comments have been invaluable.  The authors thank Mladen Bestvina, Bill Goldman, Mark Goresky, Joe Harris, Jack Huizenga, Samuel Isaacson, Dan Margalit, Andy Putman, Peter Sarnak, and Juan Souto for helpful discussions, and Tom Church and Jordan Ellenberg for comments on a draft of this paper.  The authors especially thank their advisers Benedict Gross and Curt McMullen.  The first author is supported by a National Science Foundation Graduate Research Fellowship, and the second author is supported by an NDSEG fellowship.

\end{document}